\documentclass[11pt]{amsart}
\usepackage{latexsym,amssymb,amsmath,empheq}
\usepackage{amsthm, dsfont}
\usepackage{tikz-cd}
\usepackage[all]{xy}
\usepackage[short,nodayofweek]{datetime}
\usepackage[active]{srcltx}




\addtolength\textwidth{5mm}

\setlength\leftmargini  {2pc}
\leftmargin  \leftmargini
\setlength\leftmarginii  {1.5pc}
\setlength\leftmarginiii {1.5pc}
\setlength\leftmarginiv  {1.5pc}
\setlength  \labelsep  {6pt}
\setlength  \labelwidth{\leftmargini}
\addtolength\labelwidth{-\labelsep}

\setlength\parindent{0mm}



\def\ZZ{{\mathbb Z}}

\def\RR{{\mathbb R}}
\def\CC{{\mathbb C}}

\def\H{{\mathcal H}}

\def\aaa{{\mathfrak a}}
\def\gg{{\mathfrak g}}
\def\hh{{\mathfrak h}}
\def\kk{{\mathfrak k}}
\def\pp{{\mathfrak p}}
\def\zz{{\mathfrak z}}








\DeclareMathOperator{\GL}{GL}

\DeclareMathOperator{\trace}{tr}
\DeclareMathOperator{\im}{Im}
\DeclareMathOperator{\re}{Re}
\DeclareMathOperator{\Sp}{Sp}
\DeclareMathOperator{\GSp}{GSp}

\DeclareMathOperator{\SL}{SL}
\DeclareMathOperator{\Symm}{Symm}


\theoremstyle{plain}
\newtheorem{thm}{Theorem}[section]
\newtheorem{cor}[thm]{Corollary}
\newtheorem{lem}[thm]{Lemma}
\newtheorem{remark}[thm]{Remark}
\newtheorem{prop}[thm]{Proposition}

\theoremstyle{definition}



\theoremstyle{remark}

\newtheoremstyle{Acknowledgements}
  {}
    {}
     {}
     {}
    {\bfseries}
    {}
     {.5em}
     {\thmname{#1}\thmnumber{ }\thmnote{ (#3)}}
\theoremstyle{Acknowledgements}





\begin{document}
\title[Phantom  projection]{Phantom holomorphic projections arising from Sturm's formula}

\author{Kathrin Maurischat and Rainer Weissauer}
\address{\rm {\bf Kathrin Maurischat}, Mathematisches Institut,
   Heidelberg University, Im Neuenheimer Feld 205, 69120 Heidelberg, Germany }
\curraddr{}
\email{\sf maurischat@mathi.uni-heidelberg.de}
\address{\rm {\bf Rainer Weissauer}, Mathematisches Institut,
   Heidelberg University, Im Neuenheimer Feld 205, 69120 Heidelberg, Germany }
\curraddr{}
\email{\sf weissauer@mathi.uni-heidelberg.de}
\thanks{This work was partially supported by the European Social Fund (Kathrin Maurischat).}

\subjclass[2010]{11F41, 11F70}
\begin{abstract}
We show the analytic continuation of certain Siegel Poincar\'e series to their critical point for weight three in genus two.
We proof that this continuation posesses a nonhomomorphic part and describe it.
We show that Sturm's  operator also produces a nonhomorphic share for weight three, we call it a phantom term. 
Weight three is the distinguished weight for genus two where this phenomenon arises.
\end{abstract}

\maketitle
\setcounter{tocdepth}{2}
\tableofcontents

\section{Introduction}
For hermitian symmetric domains $\mathcal H=G/K$ the complex structure on $\mathcal H$
corresponds to a decomposition $\mathop{Lie}(G) \otimes_\RR \CC$ into subspaces
$\mathop{Lie}(K) \oplus \pp_+ \oplus \pp_-$. The maximal compact subgroup $K$ of $G$
acts on the subspaces $\pp_{\pm}$, corresponding to the complexified holomorphic resp. antiholomorphic
tangent space of $\mathcal H$ at the origin. For arithmetic subgroups $\Gamma$ of the
group $G$ the Hilbert space $L^2(\Gamma\!\setminus\! G,dg)$ 
decomposes into the discrete spectrum $L^2_{\textrm{dis}}(\Gamma\!\setminus\! G,dg)$ and a continous
spectrum.
Part of this discrete spectrum is obtained as follows. By  assumption the group $G$
is of hermitian type, hence there exist discrete series representations  of $G$.  
Among these are the representations $\pi$ of the holomorphic discrete series.
The isotypical subspace $L^2(\Gamma\!\setminus\! G,dg)_\pi$ of $L^2_{\textrm{dis}}(\Gamma\!\setminus\! G,dg)$
on which $G$ acts by one of these holomorphic discrete series representations $\pi$
is isomorphic to a finite direct sum of $\pi$ with multiplicity say $m(\pi)$. 
It is
well known that these subspaces
$L^2(\Gamma\!\setminus\! G,dg)_\pi$ occur in the cuspidal part of the spectrum.
The projection operator
\begin{equation*}
  P_\pi:  L^2(\Gamma\!\setminus\! G,dg) \longrightarrow L^2(\Gamma\!\setminus\! G,dg)_\pi
\end{equation*}
can be studied by various techniques. Since a holomorphic discrete series representation
contains a unique lowest $K$-type $\tau$, it often suffices to study
the projection operator  
$ P_\pi$ on the $\tau$-isotypic subspace $L^2(\Gamma\!\setminus\! G,dg)_\tau$
of the action of $K$ on $L^2(\Gamma\!\setminus\! G,dg)$.
Classically, the analysis of the projection operator $P_\pi$ is then often
achieved by passing
from functions $f$ in $L^2(\Gamma\!\setminus\! G,dg)_\tau$ to functions $h$ 
on $\mathcal H$, defined by $h(gK)= J_\tau(g) f(g)$ using an explicit cocycle factor $J_\tau$ whose definition
involves  the lowest $K$-type of $\pi$. The functions $h$ on $\mathcal H$ so defined transform under the action of $\gamma\in \Gamma$ with 
respect to the modular transformation property $h(\gamma Z)=J_\tau(\gamma,Z)h(Z)$. The functions $f$ in the subspace $L^2(\Gamma\!\setminus\! G,dg)_\pi$
correspond to the holomorphic functions on $\mathcal H$ with this transformation property. Putting $L^2$-conditions aside (for simplicity), 
the projectors $P_\pi$ thus become  holomorphic projectors.
Classically, they were studied in terms of Fourier expansions using the theory of Poincare series
in \cite{sturm1}, \cite{gross-zagier} for the classical case $G=\SL_2$ and in \cite{panchishkin} for the case of the symplectic groups
$G=\Sp_{m}$. 
\medskip 

For the case $G=\SL_2$ this  becomes more concrete as follows. The holomorphic discrete
series $\pi=\pi_k$ of $\SL_2$ is parametrized by the integers $k\geq 2$, the weight of their lowest 
$K$-type, and it is well known that $m(\pi_k)$ can be identified with the dimension
of the space of cuspidal modular forms $[\Gamma,k]_0$ of weight $k$ with respect to the arithmetic group $\Gamma$ on the complex upper half plane $\mathcal H$. 
Suppose $\Gamma$ contains translations, so that the modular transformation property
$h(\gamma z)= j_k(\gamma,z)h(z)$ implies $h(z)=h(z+ n)$ for some integer $n$
that allows to expand $h(z)$ into a Fourier expansion 
\begin{equation*}
 h(z) \:=\: \sum_{t}\  a(t,y) \exp(2\pi i tz) \:=\: \sum_t \ b(t,y) \exp(2\pi i tx)   \: .
\end{equation*}
For $z=x+iy$ the Fourier coefficients $b(t,y)$ are functions of the imaginary part.
In this special context it has been shown by Sturm \cite{sturm1} for $k>2$ and Gross-Zagier \cite{gross-zagier} for $k=2$
that, up to some explicit normalizing constant $c(k)$ depending only on $k$, the  projector $P_\pi$ for $\pi=\pi_k$ from above corresponds to the following  
holomorphic projector defined on the level of Fourier coefficients by 
\begin{equation*}
b(t,y) \quad \mapsto \quad b_{\textrm{hol}}(t) \:=\: c(k) \cdot \int_0^\infty  b(t,y) \exp(-2\pi ty) (ty)^k \frac{dx~dy}{y^2}  \: .
\end{equation*}
So the holomorphic projection of $h(z)$ is given by the holomorphic modular
form $\sum_t b_{\textrm{hol}}(t)  \exp(2\pi i tz)$.
\medskip

Concerning the higher dimensional cases studied in \cite{panchishkin} and \cite{holproj}, 
it turned out that for the holomorphic discrete series of scalar weight high enough again the holomorphic projectors $P_\pi$ are given
by analogous holomorphic projections on the level of Fourier coefficients generalizing
the Sturm projections from above. One could therefore believe, that this holds quite generally
for all holomorphic discrete series representations of the symplectic group $\Sp_m$.
However, in the higher rank case new phenomena occur. Although the Sturm
projection operators are defined, in general they do not coincide with the corresponding 
projector operators in all cases. This may be the general situation for those holomorphic discrete series whose lowest $K$-type is small. 
For the special case $m=2$ we analyze this in detail.
The holomorphic discrete series $\pi=\pi_k$ of the group $\Sp_2$ are indexed by their lowest $K$-types $k=(k_1,k_2)$ that are given by integers $k_1 \geq k_2 \geq 3$. 
The \lq{smallest\rq}\ case is $k_1=k_2=3$, the case of scalar weight 3. We show
that in this case, as opposed to the cases of weight greater $3$, 
the Sturm projection does not describe the projection operator $P_{\pi}$
for the holomorphic discrete series of scalar weight 3.  

For this we use Poincar\'e series of exponential type of weight $(3,3)$. 
Their meromorphic continuation is established by the methods of~\cite{holproj}.
That is, we use the resolvents of special Casimir operators whose pole behavior  on $L^2(\Gamma\backslash \Sp_2(\RR))$ is described by representation theory.
By a careful inspection of the  spectral components containing the $K$-type  $(3,3)$ nontrivially, the  continuation of the Poincar\'e series is seen to be
analytic in the critical point. More precisely, their spectral locus there  consists of the expected holomorphic discrete series representation $\pi_{(3,3)}^-$ and in addition 
the holomorphic representation $\pi_{(1,1)}^{\mathop {hol}}$ of weight one (Theorem~\ref{lem_diskretes_spek}).
Correspondingly, the Poincar\'e series written as modular forms on $\mathcal H$ have a decomposition
\begin{equation*}
 p_T\:=\: f\:+\: \Delta_+^{[2]} (h_T),
\end{equation*}
where $f_T$ is a holomorphic cuspform of weight $(3,3)$ and $\Delta_+^{[2]} (h_T)$ is the nonholomorphic derivative of a holomorphic modular form of weight $(1,1)$ 
by the Maass operator $\Delta_+^{[2]}$. Both components are nontrivial in general (Theorem~\ref{satz_phantom_proj}). This reflects the property of Sturm's operator.
\begin{thm}
 Let $h$ be a holomorphic cuspform on $\mathcal H$ of weight $(k,k)$. For $\kappa=k+2\geq 4$ Sturm's operator applied to the nonholomorphic image $\Delta_+^{[2]} (h)$ is zero, but for 
 $\kappa=3$ it does not vanish. 
\end{thm}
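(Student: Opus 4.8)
The plan is to evaluate Sturm's weight-$(\kappa,\kappa)$ operator on $\Delta_+^{[2]}(h)$ termwise in the Fourier expansion and to isolate the resulting dependence on $\kappa = k+2$. Write $Z = X+iY$ with $Y>0$ symmetric, and expand the holomorphic cusp form as $h(Z) = \sum_{T>0} a(T)\,e^{2\pi i\operatorname{tr}(TZ)}$, the sum running over half-integral positive definite $T$. Applying the explicit second-order Maass operator $\Delta_+^{[2]}$ of the previous sections term by term, and using that it annihilates the antiholomorphic variables so that all surviving $Y$-dependence comes from its built-in powers of $\det(Y)$ and $Y^{-1}$, I obtain a Fourier expansion
\[
  \Delta_+^{[2]}(h)(Z) \;=\; \sum_{T>0} a(T)\,Q_\kappa(T,Y)\,e^{2\pi i\operatorname{tr}(TX)}\,e^{-2\pi\operatorname{tr}(TY)},
\]
where $Q_\kappa(T,Y)$ is an explicit polynomial in the entries of $T$ and of $Y^{-1}$ with coefficients rational in $\kappa$. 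This first step is bookkeeping, but the precise monomials of $Q_\kappa$ control everything downstream.

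Next I would feed this into the genus-two Sturm operator, whose $T$-th coefficient is the direct analogue of the $\SL_2$-formula of the introduction: an integral over the symmetric cone of positive definite $Y$,
\[
  b_{\mathrm{hol}}(T) \;=\; c(\kappa)\,a(T)\int_{Y>0} Q_\kappa(T,Y)\,e^{-4\pi\operatorname{tr}(TY)}\,\det(TY)^{\kappa}\,\frac{dY}{\det(Y)^{3}},
\]
the $X$-integration having collapsed the series to its diagonal. Since every monomial of $Q_\kappa$ is a product of cofactor entries of $Y$ divided by a power of $\det(Y)$, each contribution reduces to a Gindikin matrix Gamma integral $\int_{Y>0}\det(Y)^{s-3/2}\,p(Y)\,e^{-\operatorname{tr}(AY)}\,dY$ with $A = 4\pi T$, evaluable in closed form through $\Gamma_2(s) = \pi^{1/2}\Gamma(s)\Gamma(s-\tfrac12)$ and its derivatives in $A$. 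For the purely holomorphic term $Q_\kappa \equiv 1$ the effective parameter is $s = \kappa - \tfrac32$ and the integral reproduces $a(T)$ up to the nonzero normalization $c(\kappa)$. The crucial observation is that the monomials responsible for the critical behaviour lower the effective parameter to the point where the factor $\Gamma(\kappa - 3)$ appears: this factor is regular, and the associated cone integral converges, precisely for integers $\kappa \geq 4$, whereas $\kappa = 3$ sits exactly on its pole.

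From here the two regimes separate. For integer $\kappa \geq 4$ the whole integral converges absolutely and I would argue vanishing structurally: Sturm's operator then coincides with the genuine holomorphic projection $P_\pi$ established for high weight in the cited works, which is formally adjoint to the inclusion of holomorphic forms, so that the raising-operator image $\Delta_+^{[2]}(h)$ is orthogonal to every holomorphic Poincar\'e series; equivalently, after collecting the closed-form Gamma evaluations, the coefficient of $a(T)$ is a combination that cancels identically for all $\kappa \geq 4$. For $\kappa = 3$ this same formal cancellation is spoiled by the pole of $\Gamma(\kappa-3)$: the integral must be read as the analytic continuation in $\kappa$, equivalently as a boundary residue on the rank-one stratum of the cone, and this continuation returns a finite nonzero multiple of $a(T)$. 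Non-vanishing at $\kappa = 3$ then follows provided $h \not\equiv 0$, and this is the phantom term.

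The main obstacle is the joint content of the second and third steps. Because $\Delta_+^{[2]}$ is a determinant of a $2\times2$ matrix of first-order operators, $Q_\kappa(T,Y)$ is a sum of several monomials in $Y^{-1}$ whose cone integrals produce Gamma factors with arguments shifted by various half-integers, and individually several of these are singular at the low weights. The delicate point is to show that the singular contributions organize so that their residues cancel for every integer $\kappa \geq 4$ yet leave a single uncancelled nonzero term at the critical weight $\kappa = 3$. I would carry this out by assembling the total coefficient of $a(T)$ into one meromorphic function of $\kappa$, checking that it vanishes at the integers $\kappa \geq 4$ and evaluating its regular value at $\kappa = 3$ explicitly; that evaluation, rather than any single integral, is where the weight-three phenomenon actually resides.
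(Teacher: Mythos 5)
Your proposal is correct and follows essentially the same route as the paper's proof of Proposition~\ref{prop_sturm_operator}: compute the Fourier coefficients of $\Delta_+^{[2]}(h)$ explicitly, reduce Sturm's integral termwise to the matrix Gamma integrals of Lemma~\ref{lemma_gamma_level_2}, and assemble the total coefficient of $a(T)$ into a single meromorphic expression --- in the paper $s(s-\tfrac{1}{2})\Gamma(s+k-\tfrac{1}{2})\Gamma(s+k-1)$, regularized in the auxiliary variable $s$ at fixed weight rather than in $\kappa$ --- whose explicit factor $s$ forces vanishing for $\kappa\geq 4$, while at $\kappa=3$ the pole of $\Gamma(s)$ (your $\Gamma(\kappa-3)$) cancels that zero and leaves the nonzero value $-\tfrac{1}{2}\Gamma(\tfrac{1}{2})$, i.e.\ $A(T,Y)\mapsto -\tfrac{1}{4\pi}a(T)\det(T)$. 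The only deviations are minor: the paper establishes the $\kappa\geq4$ vanishing by the same explicit Gamma cancellation for all $k>1$ rather than by your alternative orthogonality argument, and its $s$-regularization at fixed $\kappa$ (which matches how the Poincar\'e pairing is analytically continued) sidesteps having to identify your continuation in $\kappa$ with the actual weight-three operator.
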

So Sturm's operator will realize the
projection  $P_{\pi}$ only up to some additional phantom projection $Q_\pi$ to 
a space of phantom components
\begin{equation*}
 L^2(\Gamma\!\setminus\! G,dg)_{\mathop{phan}(\pi)} \subset L^2(\Gamma\!\setminus\! G,dg)
\end{equation*}
defined by  certain representations $\mathop{phan}(\pi)$ associated to $\pi$, which in the case above is $\mathop{phan}(\pi)=\pi_{(1,1)}^{\mathop{hol}}$.

\section{Notation and resolvents}
We follow the notation of \cite{holproj}.
Let $G=\Sp_m(\RR)$ be the symplectic group of genus $m$. 
Apart from section~\ref{diffoperators} we restrict to the case  $m=2$. 
Realize  $G$ as the group of those $g\in M_{m,m}(\RR)$ satisfying $g'Wg=W$ for 
\begin{equation*}
 W=\begin{pmatrix}0&-E_m\\E_m&0\end{pmatrix}.
\end{equation*}
We have the usual action of $G$ on the Siegel halfspace $\H$, for $g=\begin{pmatrix}a&b\\c&d\end{pmatrix}\in G$,
\begin{equation*}
 g Z=(aZ+b)(cZ+d)^{-1} \:.
\end{equation*}
The stabilizer $K$ of $i=iE_m\in\H$ is a maximal compact subgroup of 
$G$. It is isomorphic to the unitary group $U(m)$. We denote by
\begin{displaymath}
 g\:\mapsto \:g i=:Z=X+iY
\end{displaymath}
the obvious isomorphism of $G/K$ to $\H$.
Let $\mathfrak g$ be the Lie algebra of $G$ realized as  $\mathfrak g_\CC\subset M_{2m,2m}(\mathbb C)$ consisting of those $g$ satisfying
$g'W+Wg=0$.
Then $\mathfrak g_\CC=\mathfrak p_+\oplus\mathfrak p_-\oplus\mathfrak k_\CC$,
where $\mathfrak k_\CC$ is the Lie algebra of $K$ given by the matrices satisfying
\begin{equation*}
 \begin{pmatrix}A&S\\-S&A\end{pmatrix}\:, \quad A'=-A\:, \quad S'=S\:,
\end{equation*}
and
\begin{equation*}
 \mathfrak p_\pm=\left\{\begin{pmatrix}X&\pm iX\\\pm iX&-X\end{pmatrix},\quad X'=X\right\}.
\end{equation*}
Let $e_{kl}\in M_{m,m}(\mathbb C)$ be the elementary matrix having entries $(e_{kl})_{ij}=\delta_{ik}\delta_{jl}$ 
and let $X^{(kl)}=\frac{1}{2}(e_{kl}+e_{lk})$.
 The elements $(E_\pm)_{kl}=(E_\pm)_{lk}$ of $\mathfrak p_\pm$ are defined to be those corresponding to $X=X^{(kl)}$, 
$1\leq k,l\leq m$. 
Then $(E_\pm)_{kl}$, $1\leq k\leq l\leq m$ form a basis of $\mathfrak p^\pm$.
A basis of $\mathfrak k_\CC$  is given by $B_{kl}$, for $1\leq k,l\leq m$, where  $B_{kl}$ corresponds 
to $A_{kl}=\frac{1}{2}(e_{kl}-e_{lk})$ and $S_{kl}=\frac{i}{2}(e_{kl}+e_{lk})$.
Let $E_\pm$ be the matrix having entries $(E_\pm)_{kl}$. 
Similarly, let $B=(B_{kl})_{kl}$ be the matrix with entries $B_{kl}$ and let $B^\ast$ be its transpose.
$E_+$, $E_-$, $B$ and $B^\ast$ are matrix valued matrices. 
Formal traces of their formal products, e.g. $\trace(E_+E_-)$, are not invariant under cyclic permutations of their arguments.
The center $\zz_\CC$ of the universal enveloping Lie algebra $\mathfrak U(\mathfrak g_\CC)$
 is generated by $m$ elements.
 The Casimir elements $C_1,C_2$ belong to $\zz_\CC$,
\begin{equation*}
 C_1=\frac{1}{2}(\trace(E_+E_-)+\trace(E_-E_+))+\trace(BB)\:,
\end{equation*}
\begin{eqnarray*}
 C_2&=& \frac{1}{2}\bigl(\trace(E_+E_-E_+E_-)+\trace(E_-E_+E_-E_+)+\trace(B^4)+\trace((B^\ast)^4)\bigr)\\
&&+2\bigl(\trace(E_+E_-BB)+\trace(E_-E_+B^\ast B^\ast)\bigr)\\
&&-\sum_{i,j,k,l}\{(E_+)_{kl},(E_-)_{ij}\}B_{jk}B_{il}\\
&&+\frac{(m+1)^2}{2}(\trace(E_+E_-)+\trace(E_-E_+))\:.
\end{eqnarray*}
For a  smooth representation $\pi$ of $G$ the actions of the Casimir elements restricted to scalar $K$-types $(\kappa,\dots,\kappa)$ are given by
\begin{equation*}
 \pi(C_1)\:=\: \pi(\trace(E_+E_-))-\kappa m(m+1-\kappa)
\end{equation*}
and
\begin{eqnarray*}
 \pi(C_2)&=&\pi(\trace(E_+E_-E_+E_-))+m\kappa^4\\
&&+((m+1)^2-2\kappa (m+1)+2\kappa^2)\bigl(\pi(\trace(E_+E_-))-\kappa m(m+1)\bigr)\:.
\end{eqnarray*}

For genus $m=2$ let
\begin{equation}\label{def_kompakte_CUA}
 \hh_\CC \:=\:\CC B_{11}+\CC B_{22} \:\subset\: \kk_\CC
\end{equation}
be a common Cartan subalgebra  for  $\kk_\CC$ and $\gg_\CC$. 
Let $\Delta^+$ be the set of positive roots for $\hh_\CC$ such that their root spaces belong to $\CC B_{12}+\pp^-$. 
Writing $\Lambda=(\Lambda_1,\Lambda_2)$ for $\Lambda \in\hh_\CC^\ast$, where $\Lambda_j=\Lambda(B_{jj})$, 
these root spaces are 
\begin{eqnarray*}
 \gg_{(1,-1)}=\CC B_{12}, & \gg_{(2,0)}=\CC (E_-)_{11}\:,\\
\gg_{(1,1)}=\CC (E_-)_{12}, & \gg_{(0,2)}=\CC (E_-)_{22}\:.
\end{eqnarray*}
Half the sum of positive root  is
\begin{equation*}
 \delta\:=\:\delta_G\:=\:\frac{1}{2}\sum_{\Lambda\in\Delta^+}\Lambda=(2,1)\:,
\end{equation*}
while $\delta_K=\frac{1}{2}(1,-1)$ is half the sum of positive compact roots.
Applying the linear form $\Lambda$ to the images of $C_1$ and $C_2$ under the Harish-Chandra homomorphism we get
\begin{eqnarray*}
 \Lambda(C_1)\:=\:\Lambda(\gamma(C_1))&=&\Lambda_{1}^2+\Lambda_{2}^2-5\:,\\
\Lambda(C_2)\:=\:\Lambda(\gamma(C_2))&=&\Lambda_{1}^4+\Lambda_{2}^4-17+3\Lambda(C_1)\:.
\end{eqnarray*}
The diagonal subalgebra $\mathfrak a_\CC$  is another Cartan subalgebra, and by choosing Euclidean coordinates $\Lambda=(\Lambda_1,\Lambda_2)$ there
we get an  isometric isomorphism to $\mathfrak h_\CC$. So the above formulas retain valid.
Choosing the system of positive roots correspondingly,
\begin{equation*}
 \Delta^+:=\{\alpha_1=(0,2),\alpha_2=(1,-1),\alpha_1+\alpha_2,\alpha_1+2\alpha_2\} \subset \mathfrak a_\CC^\ast\:,
\end{equation*}
$\mathfrak a$ is the  split component of the Borel subgroup 
\begin{equation*}
 B\:=\:\left\{\begin{pmatrix} T&X\\0&T'^{-1}\end{pmatrix}\mid T \textrm{ upper triangular }\right\}\:\subset\: G\:.
\end{equation*}
The Weyl group $W$ of $G$ acts on $\aaa_\CC^\ast$. It is generated by the simple reflections $s_{\alpha_1}$ and $s_{\alpha_2}$. 
We also define $\mathfrak a_1=\ker(\alpha_1)$ to be the split component of the Klingen parabolic $P_1\supset B$, and
$\mathfrak a_2=\ker(\alpha_2)$ to be the split component of the Siegel parabolic $P_2\supset B$.

Let $u$ and $v$ be complex variables. In \cite[sec.~3]{holproj} there are chosen  elements
\begin{eqnarray*}
 D_+(u,\Lambda) &=&  \prod_{\alpha\in\Delta\textrm{ long}} \bigl(\check{\alpha}(\Lambda)-u\bigr)\:,\\
      D_-(v,\Lambda)  &=& \prod_{\alpha\in\Delta\textrm{ short}} \bigl(\check{\alpha}(\Lambda)-v\bigr)\:,
 \end{eqnarray*}
 or equivalently,
\begin{eqnarray*}
 D_+(u,\Lambda)&=& (\Lambda_1^2-u^2)(\Lambda_2^2-u^2)\:,\\
 D_-(v,\Lambda)&=& \bigl((\Lambda_1+\Lambda_2)^2-v^2\bigr)\bigl((\Lambda_1-\Lambda_2)^2-v^2\bigr)\:.
\end{eqnarray*}
They are the images of the Casimir elements
\begin{eqnarray*}
 D_+(u)&:=& \frac{1}{2}\bigl(C_1^2-C_2+11C_1-2(u^2-1)C_1+2(u^2-1)(u^2-4)\bigr)\:,\label{Dplus_in_koordinaten}\\
D_-(v)&:=& 2C_2-C_1^2-34C_1-2(v^2-9)C_1+(v^2-9)(v^2-1)\:\label{Dminus_in_koordinaten}
\end{eqnarray*}
under the Harish-Chandra homomorphism.
Let $\Gamma$ be any subgroup of finite index in the full modular group 
$\Sp_2(\ZZ)$ containing 
  the group 
\begin{equation*}
 \Gamma_\infty\:=\:\{\begin{pmatrix}
  \pm E_2&\ast\\0&\pm E_2 
 \end{pmatrix}\in\Sp_2(\ZZ)\}
\end{equation*}
of translations.
The space $L^2(\Gamma\backslash G)$ is a representation space for $G=\Sp_2(\RR)$ by right translations. This $G$-action
 comes along with an action of the universal enveloping algebra 
$\mathfrak U(\gg_\CC)$ on $\mathcal C^\infty$-vectors and action of the elements $D_+(u)$ and $D_-(v)$ allows extension to $L^2(\Gamma\backslash G)$.
Behavior and existence of the resolvents $R_\pm$ of the Casimir operators $D_\pm$ on $L^2(\Gamma\backslash G)$ are regulated by their spectrum.
We are interested in the scalar $K$-type $\kappa=(3,3)$.
\begin{prop}\label{prop_resolventen}
The resolvent $R_-(v)$ exists as a meromorphic function on $\re v >1$.
The resolvent $R_+(u)$ exists as a meromorphic function on $\re u>\frac{1}{2}$. Its spectral pole locus within $L^2(\Gamma\backslash G)_{(3,3)}$ at $u=1$
is given by the two discrete parameters $\Lambda=(2,1)$ and $\Lambda=(0,1)$. 
\end{prop}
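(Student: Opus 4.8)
The plan is to read off the action of $D_+(u)$ and $D_-(v)$ from the spectral decomposition of $L^2(\Gamma\backslash G)_{(3,3)}$ and to invert it component by component, following the resolvent method of \cite{holproj}. First I would invoke the decomposition of $L^2(\Gamma\backslash G)$ into discrete and continuous spectrum and restrict to the scalar $K$-type $\kappa=(3,3)$. On any irreducible constituent with infinitesimal character $\Lambda$ the center $\zz_\CC$, hence the Casimir elements $C_1,C_2$, and therefore the polynomial combinations $D_+(u)$ and $D_-(v)$, act by the scalars computed above through the Harish-Chandra homomorphism, namely $D_+(u,\Lambda)=(\Lambda_1^2-u^2)(\Lambda_2^2-u^2)$ and $D_-(v,\Lambda)=((\Lambda_1+\Lambda_2)^2-v^2)((\Lambda_1-\Lambda_2)^2-v^2)$. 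Consequently the resolvents $R_\pm$ act on each component by multiplication with the reciprocal of these scalars, and the whole problem reduces to controlling the location of the parameters $\Lambda$ occurring in the spectrum and to checking convergence of the resulting spectral sums and integrals.

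Second, for the continuous spectrum I would use that the Eisenstein series attached to the Borel, the Klingen parabolic $P_1$ and the Siegel parabolic $P_2$ carry parameters $\Lambda$ running over explicit affine lines whose real parts lie in a bounded strip; combined with the polynomial growth of $D_\pm(\cdot,\Lambda)$ this makes $1/D_\pm(\cdot,\Lambda)$ integrable against $L^2$-data and holomorphic in the stated half-planes. The abscissae $\re v>1$ for $R_-$ and $\re u>\frac12$ for $R_+$ reflect exactly the region in which the individual factors $(\Lambda_1\pm\Lambda_2)^2-v^2$ resp.\ $\Lambda_j^2-u^2$ stay away from zero on the continuous part, so that the only possible poles there come from the discrete spectrum. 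This yields the meromorphic continuation asserted in the first two sentences.

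Third, to pin down the pole locus of $R_+(u)$ at $u=1$ I would note that a pole can occur only where $D_+(u,\Lambda)=(\Lambda_1^2-u^2)(\Lambda_2^2-u^2)$ vanishes, i.e.\ for $u\in\{\pm\Lambda_1,\pm\Lambda_2\}$; at $u=1$ inside $\re u>\frac12$ this forces $\Lambda_1=1$ or $\Lambda_2=1$ for a discrete parameter $\Lambda$. It then remains to enumerate the discrete automorphic representations carrying the scalar $K$-type $(3,3)$ whose infinitesimal character has a coordinate equal to $1$. Using $\delta=(2,1)$ together with the classification of the unitary representations of $\Sp_2(\RR)$ containing the $(3,3)$ $K$-type, the holomorphic discrete series of scalar weight three has parameter $\Lambda=(2,1)$, and the weight-one holomorphic representation $\pi_{(1,1)}^{\mathop{hol}}$ has parameter $\Lambda=(0,1)$; both have $\Lambda_2=1$, while one checks that no other admissible parameter with this $K$-type meets $\Lambda_1=1$ or $\Lambda_2=1$ inside $\re u>\frac12$.

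The main obstacle I expect is precisely this last classification: one must know exactly which irreducible unitary representations of $\Sp_2(\RR)$ contain the scalar $K$-type $(3,3)$, together with their infinitesimal characters, and must rule out any further discrete (in particular residual) parameter with a coordinate equal to $1$ as well as any genuine pole contribution from the continuous spectrum at $u=1$. Equivalently, this is the representation-theoretic input that isolates the unexpected second parameter $\Lambda=(0,1)$ — the source of the phantom term — and it is the step in which the small lowest $K$-type $(3,3)$ makes the behaviour differ from the generic case $\kappa>3$.
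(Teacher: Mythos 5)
Your overall route coincides with the paper's: reduce $D_\pm$ to the scalars $D_+(u,\Lambda)=(\Lambda_1^2-u^2)(\Lambda_2^2-u^2)$ and $D_-(v,\Lambda)=\bigl((\Lambda_1+\Lambda_2)^2-v^2\bigr)\bigl((\Lambda_1-\Lambda_2)^2-v^2\bigr)$ via the Harish-Chandra homomorphism, quote the meromorphy of the resolvents from \cite[sec.~3]{holproj}, and settle the pole locus at $u=1$ by classifying the unitary representations carrying the scalar $K$-type $(3,3)$. But your second step contains a genuine error: it is \emph{not} true that on $\re u>\frac{1}{2}$ the continuous spectrum keeps $D_+(u,\Lambda)$ away from zero, so that ``the only possible poles there come from the discrete spectrum.'' The $1$-dimensional continuous components are of the form $K_{\alpha_1}(c)$ or $K_{\alpha_1+2\alpha_2}(c)$ with parameters $(i\RR,c)$ for fixed real $c$; since one coordinate is real and constant along the family, $D_+(u,\Lambda)$ vanishes identically on such a component at $u=c$, and $R_+(u)$ does acquire finitely many simple poles there --- in particular a potential pole at $u=1$ from $K_{\alpha_1}(1)=(i\RR,1)$ (this is exactly Proposition~\ref{prop_meromorphe_resolventen}, i.e.\ \cite[Prop.~3.1, 3.2, 3.3]{holproj}). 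Consequently the assertion about the pole locus within $L^2(\Gamma\backslash G)_{(3,3)}$ cannot follow from pole-location bookkeeping alone: one must show that $K_{\alpha_1}(1)$ contributes nothing to the $(3,3)$-isotypic subspace. You flag this in your last paragraph as something to ``rule out,'' but you supply no mechanism, and the mechanism you do state (continuous parameters staying away from the zero set of $D_+$) would fail. The paper's Proposition~\ref{lemma_kein_kont_spek} closes this gap by an explicit computation: the inducing character of the Klingen datum takes the value $1$ at $m_0=m\bigl(1,\begin{pmatrix}0&1\\-1&0\end{pmatrix}\bigr)\in K\cap M_1$, whereas the scalar $K$-types $\pm(3,3)$ restrict to $\det^{\pm3}$ on $K\cong U(2)$ and give $\det^{\pm3}(\psi(m_0))=\pm i\neq 1$; hence $(3,3)$ does not occur in this component.

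The discrete half of your third step is correctly aimed: vanishing of $(\Lambda_1^2-u^2)(\Lambda_2^2-u^2)$ at $u=1$ forces a coordinate equal to $1$ up to Weyl conjugacy, and the identification of the surviving parameters $(2,1)$ and $(0,1)$ is precisely Theorem~\ref{lem_diskretes_spek}. You rightly identify this classification as the crux, but in the paper it is a substantive chain of arguments rather than a citation: Zhu's multiplicity-one statement for scalar $K$-types \cite{zhu} (Lemma~\ref{lem_K-type_infchar}), Nzoukoudi's list of unitary Langlands quotients \cite{nzoukoudi} whittled down by integrality, unitarity and the Eisenstein bound $\lVert\Lambda\rVert^2\leq\lVert\delta\rVert^2=5$ (Lemma~\ref{nzoukoudi-ausbeute}), a Blattner-parameter analysis excluding every discrete series except $\pi_{(3,3)}^-$ (Lemma~\ref{discrete_series}), and a Frobenius-reciprocity check that isolates $\pi_{(1,1)}^{\mathop{hol}}$ while killing the remaining quotients (Lemma~\ref{lemma_ausschlussverfahren}). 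As written, your proposal assumes the conclusion of this step; a complete proof must carry out that chain, and must replace your incorrect continuous-spectrum argument by the $K$-type exclusion above.
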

\begin{proof}[Proof of Proposition~\ref{prop_resolventen}]
 The meromorphicity of the resolvents on the given domains is shown in~\cite[sec.~3]{holproj}.
 The poles of $R_+(u)$ in $u=1$ are given by the $1$-dimensional continuous spectral component $K_{\alpha_1}(1)=(i\RR,1)$ and discrete components indexed by $\Lambda=(s,1)$.
 By Proposition~\ref{lemma_kein_kont_spek} the first does not occur in $L^2(\Gamma\backslash G)_{(3,3)}$.
 By Theorem~\ref{lem_diskretes_spek}, the  remaining discrete parameters are the claimed.
\end{proof}


\section{On the spectrum of $L^2(\Gamma\backslash G)$}

We give results on the occurrence of the $K$-type $(3,3)$ in $L^2(\Gamma\backslash G)$.

\begin{prop}\label{lemma_kein_kont_spek}
In the $1$-dimensional continuous spectral component included in the parameter $K_{\alpha_1}(1)=(i\RR,1)$ the $K$-type $(3,3)$ is trivial. 
\end{prop}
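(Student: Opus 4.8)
The plan is to realize $K_{\alpha_1}(1)$ explicitly as a unitarily induced representation and to reduce the assertion to a branching computation for the maximal compact subgroup. Since the split component $\mathfrak a_1=\ker(\alpha_1)$ is that of the Klingen parabolic $P_1$, whose Levi is $\GL_1(\RR)\times\Sp_1(\RR)$ with $\Sp_1(\RR)\cong\SL_2(\RR)$ attached to the long root $\alpha_1=(0,2)$, the $1$-dimensional continuous component carried by the parameter $(i\RR,1)$ is $\operatorname{Ind}_{P_1}^{G}(\chi_s\otimes\sigma_2)$ (normalized induction), where $s\in i\RR$ is the continuous parameter on the split torus $A_1$ and $\sigma_2$ is the $\SL_2(\RR)$-representation on the semisimple factor whose Harish-Chandra parameter is the fixed second coordinate $\Lambda_2=1$. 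First I would write down this induced module and record that our task is to show that the scalar $K$-type $(3,3)$, i.e. the one-dimensional character $\det^3$ of $K\cong U(2)$, occurs with multiplicity zero.

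Next I would apply Frobenius reciprocity for the restriction of an induced representation to $K$. Writing $M_1\cap K$ for the maximal compact subgroup of the Klingen Levi, this gives
\[
  \bigl[\operatorname{Ind}_{P_1}^{G}(\chi_s\otimes\sigma_2)\big|_K : \tau_{(3,3)}\bigr]
  \;=\;\dim\Hom_{M_1\cap K}\!\bigl(\tau_{(3,3)}\big|_{M_1\cap K},\,\sigma_2\big|_{M_1\cap K}\bigr),
\]
so that the continuous parameter $s$ drops out entirely and only the compact branching remains. The group $M_1\cap K$ contains the circle $\SO(2)\subset\SL_2(\RR)$ generated by $B_{22}$, and on this circle the character $\det^3=\tau_{(3,3)}$ restricts to the weight $\check\alpha_1(3,3)=3$, which is odd.

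The decisive step is then the parity computation on the $\SL_2(\RR)$-factor. The representation $\sigma_2$ fixed by $\Lambda_2=1$ carries only even $\SO(2)$-weights, so the odd weight $3$ contributed by $\tau_{(3,3)}$ cannot match any weight of $\sigma_2\big|_{\SO(2)}$; by the displayed reciprocity the multiplicity is therefore zero. To pin down this parity I would use the Casimir eigenvalue formulas on scalar $K$-types together with the normalization $\Lambda_2=1$ to identify $\sigma_2$ as the even (spherical) member of its infinitesimal-character packet, and check that the central/parity character forced by $\Gamma\supset\Gamma_\infty$ (which contains $-E_4$) is compatible only with even $\SO(2)$-weights on the $\Sp_1$-factor.

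The main obstacle I expect is precisely this last identification: at the boundary value $\Lambda_2=1$ the relevant principal series of $\SL_2(\RR)$ is reducible, and I must make sure I am extracting the correct irreducible (even-weight) constituent rather than an odd-weight limit of discrete series. Once the parity of $\sigma_2$ is secured, the branching obstruction and hence the vanishing of the $(3,3)$-multiplicity follow immediately, and this is what feeds into the proof of Proposition~\ref{prop_resolventen}.
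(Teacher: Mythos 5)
Your skeleton is essentially the paper's: the paper also realizes this component as a Klingen-parabolically induced representation and tests the scalar $K$-type against the inducing datum on $K\cap M_1$, concretely by evaluating at the order-four element $m_0=m\bigl(1,\left(\begin{smallmatrix}0&1\\-1&0\end{smallmatrix}\right)\bigr)\in K\cap M_1$, where the inducing character takes the value $1$ while $\det^{\pm3}(\psi(m_0))=\pm i$ --- which is exactly your even/odd obstruction on the circle, read modulo $4$. The difference is in how the parity of the datum on the $\SL_2$-factor is secured, and this is where your proposal has a genuine gap, precisely at the step you yourself flag. The paper avoids the problem entirely by quoting Konno's explicit global description of the component: it is induced from the \emph{character} $\omega_1\lvert\cdot\rvert^{it}\otimes\omega(\det)\lvert\det\rvert^{-it/2}$ of the Klingen Levi, which on the $\SL_2$-block factors through $\det$ and is therefore trivial on your circle; no identification of an irreducible constituent, and in particular no choice of ``the correct $\sigma_2$'' at the reducibility point, is ever needed.

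The two repairs you sketch do not close the gap. The central-character argument via $-E_4\in\Gamma$ only yields $\chi_s(-1)\,\omega_{\sigma_2}(-E_2)=1$, because $-E_4=m(-1,-E_2)$ meets \emph{both} factors of the Levi nontrivially; this is perfectly compatible with $\chi_s(-1)=\omega_{\sigma_2}(-E_2)=-1$, i.e.\ an odd datum on the $\SL_2$-factor twisted by $\operatorname{sgn}$ on the $\GL_1$-factor, so the center does not force even $\SO(2)$-weights. Likewise, Casimir eigenvalues on scalar $K$-types of $G$ only see the infinitesimal character $(it,1)$, which even and odd principal series of the Levi share, so they cannot detect parity either. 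If you want to keep your formulation instead of citing Konno, the correct repair is the observation that \emph{every} irreducible unitary representation of $\SL_2(\RR)$ with infinitesimal character $1$ --- the trivial representation and the discrete series $D_2^{\pm}$, which exhaust the unitary dual at that parameter since the odd principal series at real parameter $s=1$ is not unitary --- has only even $\SO(2)$-weights. With that in hand, your Frobenius computation does give multiplicity zero uniformly, covering both the residual datum (Konno's character, with $\sigma_2=\mathbf 1$) and a cuspidal datum $\sigma_2=D_2$, and the proposition follows as you intend.
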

\begin{proof}[Proof of Proposition~\ref{lemma_kein_kont_spek}] 
By \cite[Sec.~7.1]{konno} this spectral component for the general symplectic group $\tilde G=\mathop{GSp}_2(\mathbb A)$ is  globally given by 
\begin{equation*}
 \int_{i\RR} \mathop{ind}\nolimits_{P_{1}}^G(\omega_1\lvert\cdot\rvert^{it}\otimes \omega(\det))\lvert\det\rvert^{-it/2})~dt\:,
\end{equation*}
for unitary characters $\omega_1,\omega$ of $\mathbb G_m$, where for an element
\begin{equation*}
 m(\lambda,g)\:=\:\begin{pmatrix}\lambda&&&\\
  &a&&b\\
  &&\nu/\lambda&\\
  &c&&d
 \end{pmatrix}
\end{equation*}
of $\tilde M_{Kl}$, where $\lambda\in\mathbb G_m$ and $g=\begin{pmatrix}
                                                          a&b\\
                                                          c&d
                                                         \end{pmatrix}\in\mathop{GL}_2$
 with $\nu=\det g$, the character $ \omega_1\lvert\cdot\rvert^{it}\otimes \omega(\det))\lvert\det\rvert^{-it/2}$
 is defined
 as
 \begin{equation*}
  \omega_1\lvert\cdot\rvert^{it}\otimes \omega(\det))\lvert\det\rvert^{-it/2}(m(\lambda,g))\:=\:
  \omega_1(\lambda)\lvert\lambda\rvert^{it} \omega(\nu)\lvert\nu\rvert^{-it/2}\:.
 \end{equation*}
So at the real place the element
\begin{equation*}
 m_0\:=\:m(1,\begin{pmatrix}
        0&1\\-1&0
       \end{pmatrix})\:\in\: K\cap M_{1}
\end{equation*}
always produces the value
\begin{equation*}
  \omega_1\lvert\cdot\rvert^{it}\otimes \omega(\det))\lvert\det\rvert^{-it/2}(m_0)\:=\: 1\:,
\end{equation*}
while for the $K$-types $\pm(3,3)$ which equal $\det^{\pm3}$ on the unitary group $K\cong U(2)$ we must have (see~\ref{diffoperators})
\begin{equation*}
 \det\nolimits^{\pm3}(\psi(m_0))\:=\: \pm i\:.
\end{equation*}
So this continuous spectral component does not contain $K$-type $(3,3)$.
\end{proof}

\begin{thm}\label{lem_diskretes_spek}
Let $\pi$ be an irreducible unitary representation of $G=\Sp_2(\RR)$ with infinitesimal character in the Weyl group orbit of $\Lambda=(1,s)$ containing the $K$-type $(3,3)$ nontrivially.
Then either $\Lambda=(2,1)$ and $\pi$ is the holomorphic discrete series representation $\pi_{(3,3)}^-$  of minimal weight $(3,3)$, 
or $\Lambda=(0,1)$ and $\pi$ is the   holomorphic representation  $\pi_{(1,1)}^{\mathop{hol}}$ of weight one (non-discrete series).
\end{thm}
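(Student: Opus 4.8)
The plan is to combine the rigidity of the infinitesimal character with the positivity furnished by unitarity on the one–dimensional $K$-type $(3,3)$, and then to identify the surviving representations as holomorphic (lowest weight) modules. First I would turn the hypotheses into numerical constraints. The condition that the infinitesimal character lies in the Weyl orbit of $(1,s)$ means $\{\Lambda_1^2,\Lambda_2^2\}=\{1,s^2\}$, so by the Harish–Chandra images recorded above one gets $\Lambda(C_1)=s^2-4$ and (as a cross-check) $\Lambda(C_2)=s^4+3s^2-28$. Since $(3,3)$ occurs in $\pi$, I would evaluate the Casimir on this scalar $K$-type: with $\kappa=3$, $m=2$ the correction term $\kappa m(m+1-\kappa)$ vanishes, so the displayed scalar-$K$-type formula gives $\pi(\trace(E_+E_-))\vert_{(3,3)}=\pi(C_1)=s^2-4$.

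Next comes the unitarity input. On a unitary $(\gg,K)$-module the operators coming from $\pp_+$ and $\pp_-$ are mutually adjoint up to sign, so $\trace(E_+E_-)$ acts on the one-dimensional space $V_{(3,3)}$ as a scalar of fixed sign, namely $-\sum_{k\le l}\lVert (E_-)_{kl}v\rVert^2\le 0$ for a unit vector $v$. Hence $s^2-4\le 0$, i.e. $s^2\le 4$, with equality exactly when $\pp_-$ annihilates $V_{(3,3)}$, that is, when $(3,3)$ is a lowest $K$-type. This already dichotomizes the problem. If $s^2=4$, then $\Lambda=(2,1)$ and $(3,3)$ is a scalar lowest $K$-type; matching the infinitesimal character $(2,1)$ together with $\pi(C_1)=0$, and using that the weight $3$ meets the discrete series bound $k_1\ge k_2\ge 3$, identifies $\pi$ with the holomorphic discrete series $\pi_{(3,3)}^-$.

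The remaining, harder case is $s^2<4$, where $\pp_-$ carries $(3,3)$ into the $K$-types $(3,1)$ and $(2,2)$. Here I would argue that $\pi$ is nonetheless a holomorphic (lowest weight) module: descending by $\pp_-$ and using unitarity to force the descent to terminate, the module is generated from a scalar lowest $K$-type $\det^\ell$. For such a module the $K$-types are exactly the $V_{(\ell+2a,\ell+2b)}$ with $a\ge b\ge 0$, because $S(\pp_+)=S(\Symm^2\CC^2)$ decomposes over $U(2)$ as $\bigoplus_{a\ge b\ge 0}V_{(2a,2b)}$ with multiplicity one. Consequently $V_{(3,3)}=\det^3$ occurs if and only if $\ell\in\{1,3\}$ (one sees directly that $\ell=2$ does not produce the $K$-type $(3,3)$ at all), and combined with $s^2<4$ this forces $\ell=1$, hence $\Lambda=(0,1)$ and $\pi=\pi_{(1,1)}^{\mathop{hol}}$, a non-discrete-series point (the weight $\ell=1$ lying below the bound $3$, on the relevant part of the Wallach set). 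Note that once the lowest weight structure is in force, $\ell$ is discrete and $s\in\{0,2\}$ is automatic, so there is no room for a continuous parameter.

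I expect the genuine obstacle to be completeness in the case $s^2<4$: ruling out that $\pi$ could be a non-lowest-weight unitary representation (a principal or complementary series constituent, or another Langlands quotient) with infinitesimal character $(s,1)$ which still contains the scalar $K$-type $\det^3$. I would close this either by invoking the classification of the unitary dual of $\Sp_2(\RR)$ with this singular infinitesimal character and checking the scalar-$K$-type condition family by family, or by a self-contained argument showing that the chain of $K$-types generated from $(3,3)$ under $\pp_{\pm}$, subject to the sign constraints $\pi(\trace(E_+E_-))\le 0$ together with the analogous bound for $\trace(E_-E_+)$, is unitarizable only as a lowest weight module. Either route reduces the problem to the lowest weight computation above, yielding precisely the two representations $\pi_{(3,3)}^-$ and $\pi_{(1,1)}^{\mathop{hol}}$.
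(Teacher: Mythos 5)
Your positivity step is correct and is genuinely different from what the paper does: since $\trace(E_+E_-)$ is $\mathrm{Ad}(K)$-invariant and $\langle \pi(\trace(E_+E_-))v,v\rangle=-\sum_{k,l}\|\pi((E_-)_{kl})v\|^2\le 0$ in any unitary representation, the scalar-$K$-type formula $\pi(C_1)=\pi(\trace(E_+E_-))$ for $\kappa=3$, $m=2$ indeed forces $s^2\le 4$, with equality exactly when $\pp_-$ kills the $(3,3)$-component; the equality case then cleanly yields the lowest weight module with scalar lowest $K$-type $(3,3)$, i.e.\ $\pi_{(3,3)}^-$. This bound even disposes of the large-$s$ Langlands quotients (e.g.\ the Klingen quotients with infinitesimal character $(1,2n)$, $n\ge 2$) without the residual-spectrum norm bound $\|\Lambda\|^2\le\|\delta\|^2$ that the paper invokes in Lemma~\ref{nzoukoudi-ausbeute}.

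However, the case $s^2<4$ — which carries the whole content of the theorem — is not proved. Your assertion that ``descending by $\pp_-$ and using unitarity to force the descent to terminate'' makes $\pi$ a lowest weight module is precisely the missing step, and the tools you name cannot deliver it: both $\trace(E_+E_-)$ and $\trace(E_-E_+)$ are automatically negative semidefinite on \emph{every} vector of \emph{every} unitary representation (the same adjointness computation applies to each, their difference lying in $\mathfrak U(\kk_\CC)$), so these sign constraints are satisfied by, say, unitary principal series constituents with infinitesimal character conjugate to $(1,0)$, which contain scalar $K$-types and admit infinite $\pp_-$-chains. Ruling out such non-lowest-weight unitary representations containing $\det^3$ requires either genuine unitarity estimates across the $K$-type lattice (a mini classification computation you do not carry out) or citing the classification of the unitary dual — and the latter is exactly the paper's route: Nzoukoudi's list of unitary Langlands quotients (Lemma~\ref{nzoukoudi-ausbeute}), the Blattner-parameter analysis isolating $\pi_{(3,3)}^-$ among discrete series (Lemma~\ref{discrete_series}), Frobenius-reciprocity parity exclusions plus identification of $\pi_{(1,1)}^{\mathop{hol}}$ (Lemma~\ref{lemma_ausschlussverfahren}), all glued by Zhu's uniqueness of an irreducible with given infinitesimal character and scalar $K$-type (Lemma~\ref{lem_K-type_infchar}). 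Since you explicitly defer this step rather than execute it, the proposal is an incomplete proof. A further minor gap: your claim that the $K$-types of the weight-one module are the full multiplicity-free family $V_{(1+2a,1+2b)}$ presumes $\ell=1$ is not a reduction point of the scalar generalized Verma module (true, since $1$ exceeds the last Wallach point $1/2$ for genus $2$, but this needs saying); the paper instead obtains the nontriviality of the $(3,3)$-type in $\pi_{(1,1)}^{\mathop{hol}}$ by Frobenius reciprocity from its realization via the Klingen-induced representation with $\sigma=(\sigma_1^-,+)$.
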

Theorem~\ref{lem_diskretes_spek} is proven by the following series of lemmas.

\begin{lem}\label{lem_K-type_infchar}\cite[Theorem~1.1]{zhu}
 If $\pi$ and $\pi'$ are irreducible representations of the same infinitesimal character containing the same scalar $K$-type nontrivially, then $\pi$ and $\pi'$ are equivalent. 
\end{lem}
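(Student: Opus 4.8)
The plan is to deduce the statement from a multiplicity-one property of scalar $K$-types in the minimal principal series, together with the coherence of such principal series across a single infinitesimal character. Throughout write $G=\Sp_2(\RR)$, let $P=MAN$ be a minimal parabolic with split component $A=\exp(\aaa)$ and finite part $M=Z_K(A)$, and let $W=N_K(A)/M$ be the real Weyl group, acting on the characters of $M$ and on $\aaa_\CC^\ast$. For a character $\sigma$ of $M$ and $\nu\in\aaa_\CC^\ast$ denote by $I(\sigma,\nu)=\mathop{ind}\nolimits_P^G(\sigma\otimes e^\nu)$ the associated principal series; its infinitesimal character is the Harish--Chandra parameter attached to $\nu$, and its restriction to $K$ is independent of $\nu$.

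First I would record the multiplicity-one input. A scalar $K$-type is by definition a one-dimensional representation $\det^\kappa$ of $K\cong U(2)$. By Frobenius reciprocity its multiplicity in $I(\sigma,\nu)$ equals $\dim\Hom_M(\det^\kappa|_M,\sigma)$, and since $\det^\kappa|_M$ is one-dimensional this number is $0$ or $1$, equal to $1$ exactly when $\sigma=\det^\kappa|_M$. Crucially, a one-dimensional $K$-type is a class function on $K$ (any homomorphism to $U(1)$ is constant on conjugacy classes), so $\det^\kappa|_M$ is invariant under the conjugation action of $N_K(A)$, that is, $\det^\kappa|_M$ is a $W$-fixed character of $M$. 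Hence the only principal series in which the scalar $K$-type $\det^\kappa$ occurs at all are those with $\sigma=\sigma_\kappa:=\det^\kappa|_M$, and there it occurs with multiplicity exactly one.

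Next I would pin the two representations down inside one block. By the Casselman subrepresentation theorem, any irreducible admissible $\pi$ with infinitesimal character in the orbit of $\Lambda$ embeds in some $I(\sigma,\nu)$ with $\nu\in W\Lambda$; if $\pi$ contains $\det^\kappa$ nontrivially, then $\det^\kappa$ occurs in $I(\sigma,\nu)$, forcing $\sigma=\sigma_\kappa$. Because $\sigma_\kappa$ is $W$-fixed we have $W_{\sigma_\kappa}=W$, so the standard character identity $[I(\sigma_\kappa,\nu)]=[I(\sigma_\kappa,w\nu)]$ holds for every $w\in W$, and all the principal series $I(\sigma_\kappa,\nu')$ with $\nu'\in W\Lambda$ share the same composition factors with multiplicities. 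In this common virtual module $\det^\kappa$ appears with total multiplicity one, hence lies in a single irreducible constituent $J$. Since $\pi$ is a constituent of $I(\sigma_\kappa,\nu)$ containing $\det^\kappa$ and the $\det^\kappa$-multiplicity of any subquotient is bounded by that of the whole module, $\pi\cong J$; the same argument applied to $\pi'$ gives $\pi'\cong J$, whence $\pi\cong\pi'$.

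The step I expect to cause the most trouble is the passage from ``same virtual character'' to ``the unique $\det^\kappa$-carrying constituent is common to all the $I(\sigma_\kappa,\nu')$'': one must ensure that the $W$-invariance of $\sigma_\kappa$ genuinely yields the coincidence $[I(\sigma_\kappa,\nu)]=[I(\sigma_\kappa,w\nu)]$ for all $w$, and that the multiplicity-one bound is applied to subquotients and not merely subrepresentations, which is what makes the conclusion insensitive to the nonregularity of the parameters $\Lambda=(2,1)$ and $\Lambda=(0,1)$ relevant here. A cleaner but less self-contained alternative would invoke Vogan's classification of irreducibles by lowest $K$-type data, in which a one-dimensional $K$-type forces trivial discrete ($R$-group-type) invariants and hence uniqueness once the infinitesimal character is fixed; I would fall back on this if the principal-series bookkeeping at these non-$W$-regular parameters becomes delicate.
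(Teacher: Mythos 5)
Your proof is correct and takes essentially the same route as the paper's: Casselman's subrepresentation theorem places both representations as constituents of minimal principal series, which are forced to have inducing character $\sigma_\kappa=\det^\kappa|_M$, and Frobenius reciprocity (the paper's appeal to Peter--Weyl) gives multiplicity at most one for the scalar $K$-type, so the constituent carrying it is unique. The only difference is that you explicitly supply the step the paper compresses into ``the same induced representation'' --- the $W$-invariance of $\sigma_\kappa$ and the resulting coincidence of composition factors $[I(\sigma_\kappa,\nu)]=[I(\sigma_\kappa,w\nu)]$ across the Weyl orbit --- which fills a genuine elision in the paper's two-line argument rather than changing the method.
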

 We include a simple proof of this lemma.
\begin{proof}[Proof of Lemma~\ref{lem_K-type_infchar}]
It follows from Casselman's subrepresentation theorem that $\pi$ and $\pi'$ are constituents of the same induced representation. By Peter Weyl's theorem a scalar $K$-type occurs with at most 
multiplicity one in an induced representation. So $\pi$ equals $\pi'$ there.
\end{proof}

\begin{lem}\label{nzoukoudi-ausbeute}
 Any irreducible unitary representation $\pi$  of $G$ with infinitesimal character $\Lambda$ Weyl conjugated to $(1,s)$ occurring in $L^2(\Gamma\backslash G)$ 
 is either a discrete series representation or occurs in the following list of irreducible Langlands quotients $J'(P,\sigma,\nu)$ for  parabolic subgroups $P\not=G$.
\begin{enumerate}
\item [(a)] For  the Siegel parabolic subgroup $P=P_2$, $\sigma=\sigma_2^+$ is  the discrete series representation of $M_2$ of minimal $(K\cap M_2)$-type $2$ and $\nu=0$. 
Then $\pi$ has infinitesimal character $\Lambda=(1,-1)$.
 \item [(b)] Let P=$P_{1}$ be the Klingen parabolic subgroup. Either $\sigma=(\sigma_1^\pm,\pm)$ is a discrete series representation of $M_{1}$ of minimal $(K\cap M_{1})$-type
 $1$ and $\nu=e_1$.
 Then $\pi$ has infinitesimal character $\Lambda=(1,0)$. Or $\sigma=(\sigma_2^\pm,-)$ is a discrete series representation of $M_{1}$ of minimal $(K\cap M_1)$-type
 $2$ and $\nu=e_1$.
 Then $\pi$ has infinitesimal character $\Lambda=(1,2)$. 
 \item [(c)] If $P=B$ is the Borel subgroup, $\sigma=1$ is the trivial representation of $M_B$ and $\nu=(1,0)$ or $\nu=(2,1)$.
 Then $\pi$ has infinitesimal character equal to $\nu$. 
\end{enumerate}  
\end{lem}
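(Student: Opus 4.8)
The plan is to run the Langlands classification and then prune the resulting list using unitarity together with the condition of occurrence in the discrete spectrum. By Casselman's subrepresentation theorem and the Langlands classification, every irreducible admissible representation of $G=\Sp_2(\RR)$ is the unique irreducible Langlands quotient $J'(P,\sigma,\nu)$ of a standard module $\mathop{ind}\nolimits_P^G(\sigma\otimes e^\nu)$, with $P=MN$ a standard parabolic, $\sigma$ a discrete series (modulo center) of the Levi $M$, and $\nu$ in the closed positive chamber of $\mathfrak a_P^\ast$. If $P=G$ then $\sigma=\pi$ is itself a discrete series of $G$, which is the first alternative of the statement; so I assume $P\neq G$. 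Up to conjugacy there are exactly three proper standard parabolics: the Siegel parabolic $P_2$ with Levi $M_2\cong\GL_2(\RR)$, the Klingen parabolic $P_1$ with Levi $M_1\cong\GL_1(\RR)\times\SL_2(\RR)$, and the Borel $B$ with Levi the maximal split torus.

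Next I would compute the infinitesimal character of each candidate. The infinitesimal character of $J'(P,\sigma,\nu)$ coincides with that of the full induced module and is obtained by assembling the (known) infinitesimal character of the discrete series $\sigma$ of $M$ with the parameter $\nu$, read as a $W$-orbit in $\hh_\CC^\ast$. Using $\delta_G=(2,1)$ and the infinitesimal characters of the weight-$k$ discrete series of $\GL_2(\RR)$ and $\SL_2(\RR)$, this is a direct calculation for each pair $(\sigma,\nu)$. One then imposes that the outcome be Weyl conjugate to $(1,s)$, i.e.\ that one coordinate of a representative equal $\pm1$. Note that the surviving parameters split into the regular orbit of $(2,1)\sim(1,2)$ and the singular orbits of $(1,-1)\sim(1,1)$ and $(1,0)$; the singular ones can only host non-discrete-series representations, consistent with their appearance as quotients $J'(P,\sigma,\nu)$.

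The decisive step is to determine, for each parabolic, which $\nu$ actually occur in $L^2(\Gamma\backslash G)$. A quotient $J'(P,\sigma,\nu)$ with $P\neq G$ is non-discrete-series, so it can lie in the discrete spectrum only for one of two reasons. Either $\nu=0$, in which case $J'(P,\sigma,0)$ is tempered and may occur cuspidally as a limit of discrete series; this is what produces case~(a), the Siegel-parabolic quotient from $\sigma_2^+$ at $\nu=0$ with infinitesimal character $\Lambda=(1,-1)$. Or $\nu\neq0$, in which case $\pi$ must be residual, arising as a residue of the Eisenstein series attached to $(P,\sigma)$, so that $\nu$ is forced to one of the finitely many poles of the corresponding intertwining operator. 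For $\Sp_2$ these poles are classical: the Klingen Eisenstein series from $\sigma_1^\pm$ resp.\ $\sigma_2^\pm$ have their relevant pole at $\nu=e_1$, giving $\Lambda=(1,0)$ resp.\ $\Lambda=(1,2)$ (case~(b)), while the residual representations supported on the Borel occur at $\nu=(1,0)$ and $\nu=(2,1)$ (case~(c)). Matching infinitesimal characters then keeps exactly the list (a)--(c).

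The main obstacle is exhaustiveness: one must rule out any further unitary $J'(P,\sigma,\nu)$ with infinitesimal character Weyl conjugate to $(1,s)$ in $L^2(\Gamma\backslash G)$, for instance residual points coming from other discrete series of the Levis or from other unitary characters on the Borel. This is exactly where the full description of the discrete spectrum of $\Sp_2(\RR)$ enters, and I would invoke Nzoukoudi's classification (the global counterpart being Kim's computation of the residual spectrum of $\Sp_4$) to certify that the list of admissible pairs $(\sigma,\nu)$ is complete. Granting that input, what remains is the purely computational bookkeeping of translating abstract Langlands data into the explicit coordinates via $\delta_G=(2,1)$ and the action of the $C_2$-Weyl group, which is routine.
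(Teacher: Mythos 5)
Your decisive pruning step rests on a false dichotomy. You claim that a non-tempered quotient $J'(P,\sigma,\nu)$ with $\nu\neq 0$ can occur in $L^2(\Gamma\backslash G)$ only residually, so that $\nu$ is pinned to the poles of the relevant intertwining operators. For $G=\Sp_2(\RR)$ this is exactly wrong: the CAP phenomenon (Saito--Kurokawa lifts with respect to the Siegel parabolic, and the Howe--Piatetski-Shapiro constructions with respect to the Borel and Klingen parabolics) produces \emph{cuspidal} automorphic representations that are non-tempered. Hence ``occurs discretely and is not tempered'' does not imply ``residual,'' and Kim's computation of the residual spectrum of $\Sp_4$ cannot substitute for an analysis of the cuspidal spectrum. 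The gap is not hypothetical for this paper: the representation in case (b) with $\Lambda=(1,0)$, later identified as $\pi_{(1,1)}^{\mathop{hol}}$ and the very source of the phantom term, is precisely the kind of non-tempered point that may occur cuspidally; your argument captures it only insofar as it shows up as an Eisenstein residue, and you give no reason why every discrete --- in particular cuspidal --- occurrence must have $\nu$ at such a pole.

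The paper's proof avoids this entirely by never asking \emph{how} $\pi$ occurs. It runs Nzoukoudi's classification of the full \emph{unitary dual} of $\Sp(2,\RR)$ and imposes the unitarizability constraints on $(\sigma,\nu)$ parabolic by parabolic (at the Siegel parabolic: $z\in\ZZ$ from the infinitesimal-character condition, then $0\le z\le 1$ with parity constraints, leaving only $\nu=0$, $\sigma=\sigma_2^+$; at the Borel: the explicit complementary-series regions), supplemented by just two mild $L^2$ conditions --- reality of the infinitesimal character, and the norm bound $\lVert\Lambda\rVert^2\le\lVert\delta\rVert^2=5$ used to cut the Klingen family down to $n=0,1$. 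In your proposal this unitarity bookkeeping, which is the actual content of the lemma, is deferred to ``invoking Nzoukoudi's classification to certify that the list is complete'' --- i.e., to the statement being proved. As written the proposal neither carries out the pruning that yields (a)--(c) nor correctly characterizes the discrete occurrences. To repair it, either do the unitary-dual computation as the paper does, or replace the residue argument by a genuine classification of CAP representations (Piatetski-Shapiro, or Arthur's parametrization), which is much heavier machinery than the lemma requires.
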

\begin{proof}[Proof of Lemma~\ref{nzoukoudi-ausbeute}]
 We follow Nzoukoudi's~\cite{nzoukoudi} classification of the irreducible unitary representations via Langlands quotients.
 The parabolic subgroup $P=G$ produces the discrete series.
 
 For the Siegel parabolic, the  quotients $J'(P_2,\sigma,\nu)$ belong to limits discrete series representations $\sigma=\sigma_n^+$ of $M_2\cong\SL_2(\RR)^\pm$ with 
 infintesimal character $(n-1)(e_1-e_2)$ and
 characters $\nu=z(e_1-e_2)$ for complex $z$. For the infinitesimal character $\Lambda=(n+z,-n+z)$ of the quotient   to belong to the Weyl orbit 
 $(1,s)^W$ we must have $z\in \ZZ$. 
 The unitary 
 constraint then is $0\leq z\leq 1$ and $n$ odd. The only possible choice is $\Lambda=(1,-1)$, $\nu=0$ and $\sigma=\sigma_2^+$.
 
 For the Klingen parabolic, $\sigma=(\sigma_n^\pm,\pm)$ is a limit of discrete series of $M_{2}\cong\SL_2(\RR)\times Z_2$ with infinitesimal character 
 $2(n-1)e_2$, and $\nu=2ze_1$, $z$ complex, 
 is the 
 character. So the quotient has infinitesimal character $\Lambda=(2z,2n)$, which belongs to the Weyl orbit $(1,s)^W$ only if $2z=\pm 1$. The unitary constraint then forces
 $\sigma=(\sigma_n^\pm,-)$ for arbitrary $n\geq 0$ or $\sigma=(\sigma_1^\pm,+)$.
 As $J'(P_2,\sigma,\nu)$ isn't discrete series, 
 we have the Eisenstein constraint $\lvert\!\lvert \Lambda\rvert\!\rvert^2\leq \lvert\!\lvert \delta\rvert\!\rvert^2= 5$ for the Langlands quotient to belong to the residual spectrum
 of $L^2(\Gamma\backslash G)$. So $n=0,1$.
 
 The subgroup $M_B$ of the Borel group is isomorphic to $Z_2\times Z_2$, and any representation $\sigma=\sigma^{\epsilon_1,\epsilon_2}$ of $M_B$ is described by two signs 
 $\epsilon_1,\epsilon_2$ on   generators. 
  The infinitesimal character $\Lambda$ of the Langlands quotient equals $\nu=(z_1,z_2)$. 
 The unitary contraint implies that $\nu$ is either purely imaginary, or of the form $(x+iy,x-iy)$ with $0<x\leq \frac{1}{2}$ and $y\in \RR$, 
 or of the form $(x,iy)$ with $0<x\leq 1$ and $y\in \RR^\times$, or $z_1\geq z_2\geq 0$ are real and $z_1+z_2\leq 1$ or $(z_1,z_2)=(2,1)$ and $\sigma=1$.
 So only in the last two cases it may belong to the Weyl orbit $(1,s)^W$.
 In case $\Lambda=\nu=(1,iy)$ the infinitesimal character of the quotient is  not real. So it  doesn't appear in the residual spectrum of  $L^2(\Gamma\backslash G)$.
 The remaining possibilities are $\Lambda=(1,0)$ or $\Lambda=(1,2)$ with $\sigma=1$.
\end{proof}

\begin{lem}\label{discrete_series}
 Among the discrete series of $G$ there is a unique one carrying the  $K$-type $(3,3)$ nontrivially  and having infinitesimal character in the Weyl orbit $(1,s)^W$.
 This is the holomorphic discrete series representation $\pi_3^-$ of minimal $K$-type $(3,3)$.
 \end{lem}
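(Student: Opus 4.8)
The plan is to enumerate all discrete series of $G=\Sp_2(\RR)$ meeting the two constraints and to show that exactly one survives. For a fixed regular infinitesimal character the discrete series are parametrized by $W_G/W_K$, and here $\lvert W_G/W_K\rvert=8/2=4$. Each is pinned down by its Harish-Chandra parameter $\lambda$, normalized to be dominant for the compact root $\alpha_2=(1,-1)$ (so $\lambda_1>\lambda_2$), equivalently by the positive system $\Delta_\lambda^+\supset\{\alpha_2\}$ it singles out. Its lowest $K$-type has highest weight $\Lambda_0=\lambda+\delta_n^\lambda-\delta_c$, where $\delta_c=\tfrac12(1,-1)$ and $\delta_n^\lambda$ is half the sum of the noncompact roots positive for $\Delta_\lambda^+$; moreover, by Hecht--Schmid, every $K$-type of $\pi_\lambda$ has highest weight in $\Lambda_0+Q_n^+$, where $Q_n^+$ denotes the monoid of nonnegative integer combinations of those positive noncompact roots. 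The noncompact roots are $\pm(2,0),\pm(1,1),\pm(0,2)$.

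First I would pin down the infinitesimal character. A discrete series has regular integral infinitesimal character (as $\delta_G=(2,1)$ is integral), so its dominant representative is $(a,b)$ with integers $a>b>0$. The hypothesis $\Lambda\in(1,s)^W$ means that some coordinate of $\Lambda$ equals $1$; together with $a>b>0$ this forces $b=1$ and $a\geq 2$. In particular the singular weight $(0,1)$ appearing in Theorem~\ref{lem_diskretes_spek} supports no discrete series, in agreement with the non-discrete-series nature of $\pi_{(1,1)}^{\mathop{hol}}$.

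Next I would run through the four $W_K$-dominant representatives $\lambda\in\{(a,1),(a,-1),(1,-a),(-1,-a)\}$ of the orbit of $(a,1)$, reading off in each case the positive noncompact roots and the resulting lowest $K$-type:
\[
 \lambda=(a,1):\ \Lambda_0=(a+1,3);\qquad \lambda=(a,-1):\ \Lambda_0=(a+1,-1);
\]
\[
 \lambda=(1,-a):\ \Lambda_0=(1,-a-1);\qquad \lambda=(-1,-a):\ \Lambda_0=(-3,-a-1).
\]
The decisive step is the membership test $(3,3)\in\Lambda_0+Q_n^+$. For the latter three families it fails for every $a\geq 2$ on sign grounds: the corresponding monoids $\gener{(2,0),(0,-2),(1,1)}$, $\gener{(2,0),(0,-2),(-1,-1)}$, $\gener{(-2,0),(0,-2),(-1,-1)}$ force, respectively, the second coordinate, the second coordinate, and the first coordinate of $(3,3)-\Lambda_0$ to be nonpositive, contradicting the values $(2-a,4)$, $(2,a+4)$, $(6,a+4)$. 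For the first family $Q_n^+=\gener{(2,0),(1,1),(0,2)}$ and $(3,3)-\Lambda_0=(2-a,0)$ lies in $Q_n^+$ only for $a=2$, where it vanishes; hence $\Lambda_0=(3,3)$ is itself the lowest $K$-type and occurs with multiplicity one.

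Therefore the unique discrete series satisfying both constraints is the one with $\lambda=(2,1)$ and lowest $K$-type $(3,3)$, namely the holomorphic discrete series $\pi_3^-$, whose infinitesimal character $(2,1)$ indeed lies in $(1,2)^W$. The only points requiring care are the sign bookkeeping in computing $\delta_n^\lambda$ for each of the four positive systems and the appeal to Hecht--Schmid to confine the $K$-spectrum to $\Lambda_0+Q_n^+$; granting these, the argument reduces to the elementary cone-membership checks above.
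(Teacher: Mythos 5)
Your overall strategy coincides with the paper's: both proofs run through the Harish-Chandra parametrization of discrete series, compute the Blattner (lowest) $K$-type $\Lambda_0=\lambda+\delta_n^\lambda-\delta_c$ chamber by chamber, and test whether $(3,3)$ lies in the cone of $K$-types based at $\Lambda_0$. The differences are organizational. You enumerate all four $W_K$-dominant representatives $(a,1),(a,-1),(1,-a),(-1,-a)$ and use the sharper Hecht--Schmid support (nonnegative combinations of \emph{noncompact} positive roots only), whereas the paper works with just the two positive systems $\Delta_1^+,\Delta_2^+$, absorbing the remaining chambers into the ``$l$ or $-l$'' condition via the outer symmetry of $\GSp_2(\RR)$, uses the coarser cone spanned by all positive roots, and eliminates the large discrete series candidate $k=(3,-1)$ not by cone arithmetic but by appealing to Lemma~\ref{lem_K-type_infchar}: its infinitesimal character $(2,-1)$ is Weyl-conjugate to $(2,1)$, so containing the scalar $K$-type $(3,3)$ would force equivalence with $\pi_{(3,3)}^-$. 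Your variant is self-contained at that point and avoids the $\pm l$ bookkeeping at the cost of two extra chamber computations; your reduction of the infinitesimal character to $(a,1)$ with integral $a\geq 2$, including the remark that the singular parameter $(0,1)$ supports no discrete series, matches the paper. Your computed values of $\Lambda_0$ in all four chambers are correct.

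However, one of your three cone-membership refutations is justified incorrectly, though its conclusion is right. For $\lambda=(a,-1)$ the relevant monoid is $Q_n^+=\gener{(2,0),(1,1),(0,-2)}$, and you claim it ``forces the second coordinate to be nonpositive''; that is false, since the generator $(1,1)$ has positive second coordinate. The non-membership of $(3,3)-\Lambda_0=(2-a,4)$ still holds, but for a different reason: writing $(2-a,4)=x(2,0)+y(1,1)+z(0,-2)$ with $x,y,z\geq 0$, the second coordinate gives $y=4+2z\geq 4$, whence the first coordinate satisfies $2x+y\geq 4>2-a$ for $a\geq 2$, a contradiction. Equivalently, the functional $(x_1,x_2)\mapsto x_1-x_2$ takes the values $2,0,2$ on the three generators, hence is nonnegative on $Q_n^+$, while it equals $-(a+2)<0$ on $(2-a,4)$. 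With this one-line repair (your sign arguments for the chambers $(1,-a)$ and $(-1,-a)$ are fine as stated, since there the generators really do have uniformly nonpositive second, respectively first, coordinates), your proof is complete and correct.
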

 
 \begin{proof}[Proof of Lemma~\ref{discrete_series}]
 For a  semisimple real Lie group $G$ with $\mathop{rank} G=\mathop{rank} K$ 
 the discrete series representations are parametrized by Harish-Candra parameters (infinitesimal characters) $\Lambda$ 
 which belong the weight lattice and satisfy 
 $\langle \check\alpha ,\Lambda\rangle\not=0$ for all roots $\alpha\in\Delta$ and $\langle \check\alpha ,\Lambda\rangle>0$ for all positive compact roots $\alpha\in\Delta_c^+$.
 There is a unique choice of positive roots $\Delta_\Lambda^+$ for which $\Lambda$ is dominant. 
 Then the Blattner weight for $\Lambda$ is  given by
 \begin{equation*}
  \beta_\Lambda=\frac{1}{2}\sum_{\alpha\in\Delta_\Lambda^+}\alpha - \sum_{\alpha\in\Delta_c^+}\alpha\:.
 \end{equation*}
The minimal $K$-type of $\pi_\Lambda$ is given by the Blattner paramter
\begin{equation*}
 k\:=\: \Lambda +\beta_\Lambda\:,
\end{equation*}
and all other $K$-types of $\pi_\Lambda$ are of the form
\begin{equation*}
 k+\sum_{\alpha\in\Delta_\Lambda^+} n_\alpha \alpha\:,
\end{equation*}
for nonnegative integers $n_\alpha$ (\cite[IX.7]{knapp}).
For $G=\Sp_2(\RR)$ we  choose the Cartan subalgebra $ \mathfrak h_\CC$ of both $G$ and $K$ as in (\ref{def_kompakte_CUA})
Chooinge simple roots $\alpha_1=2\Lambda_2$ and $\alpha_2=\Lambda_1-\Lambda_2$ as before, the short root $\alpha_2$ is compact 
(i.e. its root space belongs $\mathop{Lie}(K)_\CC=\mathfrak k_\CC$) and we choose 
$\Delta_c^+=\{\alpha_2\}$. The root system of $\mathfrak g_\CC$ with respect to $\mathfrak h_\CC$  is
\begin{eqnarray*}
 \Delta&=& \{\pm\alpha_1,\pm\alpha_2,\pm (\alpha_1+\alpha_2),\pm(\alpha_1+2\alpha_2)\}\\
 &=& \{(0,\pm 2),(\pm1,\pm1),(\pm 2,0)\}\:.
\end{eqnarray*}
There are four sectors of weight vectors satisfying the above conditions for $\Delta_c^+$ corresponding to the dominant weights of the following four choices of positive roots.
For the general symplectic group $\GSp_2(\RR)$ these reduce by equivalence to the choices 
\begin{equation*}
 \Delta_1^+\:=\:\{(0,2),(1,-1),(1,1),(2,0)\}\:,
\end{equation*}
where the dominant weights $\Lambda=(\Lambda_1,\Lambda_2)$  satisfy $\Lambda_1>\Lambda_2>0$,
and
 \begin{equation*}
   \Delta_2^+\:=\:\{(0,-2),(1,-1),(1,1),(2,0)\}\:,
 \end{equation*}
for which dominant weights $\Lambda=(\Lambda_1,\Lambda_2)$  satisfy $\Lambda_1>-\Lambda_2>0$.

The holomorphic discrete series $\pi_k^-$ of $\GSp_2(\RR)$ belong to $\Delta_1^+$. Here $\beta_1=(1,2)$ and the minimal $K$-type $k$ is given by $k=(\Lambda_1+1,3)$
for dominant infinitesimal character $(\Lambda_1,1)$. For the $K$-type $l$ to occur we must have that $l$ or $-l$ is contained in
$k+\ZZ_{\geq 0}(0,2)+\ZZ_{\geq 0}(1,-1)$. So the $K$-type $l=(3,3)$ only occurs in $\pi_k^-$ if $k=(3,3)$ with infinitesimal character $(2,1)$.

The nonholomorphic discrete series $\pi_k^+$ belong to $\Delta_2^+$. Then $\beta_2=(1,0)$, the dominant infinitesimal characters are $\Lambda=(\Lambda_1,\Lambda_2)$, where $\Lambda_1>-\Lambda_2>0$.
The Blattner parameter for $\Lambda=(\Lambda_1,-1)$ is $k=(\Lambda_1+1,-1)$.
The arising $K$-types $l$ are of the form $\pm l\in k+\ZZ_{\geq 0}(1,1)+\ZZ_{\geq 0}(0,-2)$. So $l=(3,3)$ can occur as a $K$-type in $\pi_k^+$ at most in case $k=(3,-1)$, 
which has infinitesimal character $(2,-1)$ Weyl conjugated to that of $\pi_{(3,3)}^-$. But this is impossible by Lemma~\ref{lem_K-type_infchar}.
\end{proof}

\begin{lem}\label{lemma_ausschlussverfahren}
 Concerning case (b) of Lemma~\ref{nzoukoudi-ausbeute}, the Langlands quotient belonging to $\sigma=(\sigma_1^-,+)$ with $\nu=e_1$ has  nontrivial $K$-type $(3,3)$.
 It is the  holomorphic representation  $\pi_{(1,1)}^{\mathop{hol}}$ of weight one.
 In all other cases of Lemma~\ref{nzoukoudi-ausbeute} (a)-(c) the $K$-type $(3,3)$ is zero. 
\end{lem}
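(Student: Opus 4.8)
The plan is to compute, for each Langlands quotient $J'(P,\sigma,\nu)$ appearing in Lemma~\ref{nzoukoudi-ausbeute}, the multiplicity of the scalar $K$-type $(3,3)\cong\det\nolimits^{\pm3}$ by branching to $K\cap M$. Since $J'(P,\sigma,\nu)$ is a quotient of $\mathop{ind}\nolimits_P^G(\sigma\otimes\nu)$ and, by the Peter--Weyl argument already used in Lemma~\ref{lem_K-type_infchar}, a scalar $K$-type occurs there with multiplicity at most one, Frobenius reciprocity (via the Iwasawa decomposition $G=KP$, $K\cap P=K\cap M$) yields
\begin{equation*}
 \dim\Hom_K\bigl((3,3),\mathop{ind}\nolimits_P^G(\sigma\otimes\nu)\bigr)\:=\:\dim\Hom_{K\cap M}\bigl(\det\nolimits^{\pm3}\restr{K\cap M},\,\sigma\restr{K\cap M}\bigr)\:.
\end{equation*}
Whenever the right-hand side vanishes, $(3,3)$ is absent already from the induced representation, hence from the quotient. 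I would therefore first pin down $K\cap M$ inside $K\cong U(2)$ for each parabolic and compute the restriction $\det\nolimits^{\pm3}\restr{K\cap M}$.

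For the Borel subgroup the group $K\cap M_B$ consists of the diagonal sign matrices $\mathrm{diag}(\epsilon_1,\epsilon_2)$, on which $\det\nolimits^{\pm3}$ is the nontrivial character $\epsilon_1\epsilon_2$; as $\sigma=1$ is trivial the branching multiplicity is zero, disposing of case (c) for both $\nu=(1,0)$ and $\nu=(2,1)$. For the Siegel parabolic $K\cap M_2=O(2)$ and $\det\nolimits^{\pm3}\restr{O(2)}$ is the sign character, of $\SO(2)$-weight $0$, whereas $\sigma_2^+$ carries only the nonzero even $\SO(2)$-weights $\pm2,\pm4,\dots$, so the multiplicity vanishes, settling case (a). For the Klingen parabolic $K\cap M_1\cong\SO(2)\times\ZZ_2$, and $\det\nolimits^{\pm3}\restr{K\cap M_1}$ has the odd $\SO(2)$-weight $\mp3$ together with a definite $\ZZ_2$-sign. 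The datum $(\sigma_2^\pm,-)$ is excluded on parity (its $\SO(2)$-weights are even), and among $(\sigma_1^\pm,\pm)$ the weight $\mp3$ occurs only in $\sigma_1^\mp$ and only for the one matching $\ZZ_2$-sign. This isolates the single surviving datum $(\sigma_1^-,+)$, $\nu=e_1$, paired with the $K$-type $(3,3)=\det\nolimits^3$.

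It remains to treat this distinguished case: to show that for $(\sigma_1^-,+)$, $\nu=e_1$, the type $(3,3)$, which the above only places in the induced representation $\mathop{ind}\nolimits_{P_1}^G((\sigma_1^-,+)\otimes e_1)$, in fact survives in the irreducible Langlands quotient, and to identify that quotient with $\pi_{(1,1)}^{\mathop{hol}}$. The minimal $(K\cap M_1)$-datum of $(\sigma_1^-,+)$ forces the minimal $K$-type of $J'$ to be $\det\nolimits^1=(1,1)$, while its infinitesimal character $(1,0)$ is Weyl-conjugate to the parameter $(0,1)$ of $\pi_{(1,1)}^{\mathop{hol}}$; realizing the latter as the lowest weight module generated by the vector $v_{(1,1)}$ of type $(1,1)$ under $\pp^-$, whose root weights are $(2,0),(1,1),(0,2)$, its $K$-types lie in $(1,1)+\ZZ_{\geq0}(2,0)+\ZZ_{\geq0}(1,1)+\ZZ_{\geq0}(0,2)$, and $(3,3)=(1,1)+2\cdot(1,1)$ is reached by the weight $(2,2)$ component of $\Symm^2(\pp^-)$ applied to $v_{(1,1)}$.

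The main obstacle is exactly this nonvanishing. Because weight one lies below the holomorphic discrete series range ($k_2\geq3$), the module $\pi_{(1,1)}^{\mathop{hol}}$ is degenerate, and a priori the one-dimensional $(3,3)$-space of the induced representation could be annihilated on passing to the irreducible quotient; equivalently, one must rule out that the weight raising operator $\Delta_+^{[2]}$ carrying $(1,1)$ to $(3,3)$ vanishes, which is precisely the phantom phenomenon announced in the introduction. I would settle it by computing the contravariant (Shapovalov) norm on the $(3,3)$-isotypic line produced from $v_{(1,1)}$ by $\Symm^2(\pp^-)$ and checking that this line is not null at weight one. This positivity computation, and not any of the branching steps, is where the real work lies.
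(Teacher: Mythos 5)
Your branching computations are correct, and in the positive case your argument coincides with the paper's: Frobenius reciprocity for $\mathop{ind}\nolimits_{P_1}^{G}$ places the scalar type $(3,3)$ in the representation induced from $(\sigma_1^-,+)$ because that datum contains the matching $(K\cap M_1)$-character of odd $\SO(2)$-weight. For the exclusions, however, you take a genuinely different route. You branch case by case (Borel: the character $\epsilon_1\epsilon_2$ against the trivial $\sigma$; Siegel: the sign character of $O(2)$, of $\SO(2)$-weight $0$, against the two-dimensional types of weights $\pm 2,\pm 4,\dots$; Klingen: parity plus the $\ZZ_2$-sign), whereas the paper only branches in cases (a) and (b)$_+$ and kills everything else at one stroke: the leftover quotients have infinitesimal characters conjugate to $(2,1)$ or $(1,0)$, so by Lemma~\ref{lem_K-type_infchar} any of them containing $(3,3)$ would be equivalent to $\pi_{(3,3)}^-$ or to $\pi_{(1,1)}^{\mathop{hol}}$, contradicting the pairwise inequivalence of the Langlands quotients in Lemma~\ref{nzoukoudi-ausbeute}. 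Your route is more self-contained and proves the sharper statement that $(3,3)$ is absent already from the full induced modules; the paper's route is shorter and sidesteps the $\ZZ_2$-bookkeeping for the Klingen data, which you assert rather than verify --- whether the surviving datum carries the label $+$ or $-$ depends on Nzoukoudi's normalization of the sign on $m(-1,E_2)$ and should be calibrated against the computation $\det^{\pm 3}(\psi(m_0))=\pm i$ used in Proposition~\ref{lemma_kein_kont_spek}.

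The one point where your proposal is more scrupulous than the paper is the passage from the induced representation to its irreducible quotient: you rightly observe that Frobenius reciprocity alone does not give survival of $(3,3)$ in $J'(P_1,(\sigma_1^-,+),e_1)$, a step the paper disposes of in one sentence. Your plan for closing it is sound, and is in substance carried out elsewhere in the paper: the identity $\Delta_-^{[2]}\circ\Delta_+^{[2]}=\frac{3}{4}\cdot\mathrm{id}$ on weight-one forms, proved in Theorem~\ref{satz_phantom_proj} via $C_2(-\frac{3}{2})=\frac{3}{2}$, is exactly the nonvanishing of your Shapovalov norm on the $(3,3)$-line generated by the $\det^2$-component of $\Symm^2(\pp_-)$, since the lowering operator is adjoint to the raising operator up to a positive normalization (the Fourier-coefficient computation (\ref{derivative_fourier_coeff}) at $k=1$ in Proposition~\ref{prop_sturm_operator} makes the same point, and the Remark following it shows the norm does vanish at the singular weights $k=0,\frac{1}{2}$, confirming that $k=1$ is the borderline case). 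Also, the identification of the quotient with $\pi_{(1,1)}^{\mathop{hol}}$ is obtained more cheaply than by lowest-$K$-type theory: both contain the scalar type $(1,1)$ with the same infinitesimal character, so Lemma~\ref{lem_K-type_infchar} applies. As written, though, you explicitly defer the positivity computation, so your text is a correct plan whose decisive step remains to be executed, not yet a complete proof.
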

\begin{proof}[Proof of Lemma~\ref{lemma_ausschlussverfahren}]
The $(K\cap M_S)$-types of discrete series representation $\sigma_2^+$ in case (a) are included in $2+2\ZZ$. By Frobenius reciprocity, 
 the induced representation $\mathop{ind}_{P_S}^G(\sigma_2^+)$
 does not contain the odd scalar $K$-type $(3,3)$, nor does its Langlands quotient.
Concerning the limits of discrete series in case (b), the holomorphic limit of discrete series $\sigma_1^-$ of $\SL_2(\RR)$ contains the $K$-type $3$ nontrivially, as well does the 
limit of discrete series $(\sigma_1^-,+)$ of $M_{Kl}\cong \SL_2(\RR)\times Z_2$. Again by Frobenius reciprocity, the corresponding Langlands quotient contains the $K$-type $(3,3)$ nontrivially.

 The irreducible Langlands quotients are pairwise inequivalent. The quotients left by cases (b) and (c) have  infinitesimal characters  conjugated to either $(2,1)$ or $(1,0)$. 
 If one of them contained a nontrivial $K$-type $(3,3)$, it was equivalent to either $\pi_{(3,3)}^-$ or $\pi_{(1,1)}^{\mathop{hol}}$ by Lemma~\ref{lem_K-type_infchar}.
 \end{proof}


\section{Poincar\'e series for weight three}\label{section_konvergenz}
We define  Poincar\'e series of weight $\kappa=3$.
For complex variables $u$ and $v$ and positive definite $(2,2)$-matrices $T$ with half-integral entries
let
\begin{equation*}
 P_T(g,u,v) =\sum_{\gamma\in \Gamma_\infty\backslash \Gamma} H_T(\gamma g,s_1,s_2)\:\:,
\end{equation*}
where
\begin{equation*}
 H_T(g,s_1,s_2)\:=\: \frac{\exp(2\pi i\trace(T Z))}{J_\kappa(g,i)}\trace(TY)^{s_1}
\det (Y)^{s_2}\:,
\end{equation*}
for $J_\kappa(g,Z)=\det(cZ+d)^\kappa$ for $g=\begin{pmatrix}\ast&\ast\\c&d\end{pmatrix}$,
and
\begin{equation*}
 s_1\:=\:\frac{v-2u-1}{2}\quad\textrm{ and }\quad s_2\:=\:\frac{u-(\kappa-m)}{2}\:=\: \frac{u-1}{2}\:.
\end{equation*}
By~\cite[Cor. 4.4]{holproj}  these Poincar\'e series belong to $L^2(\Gamma\backslash G)$ within their area of convergence
\begin{equation*}
A\:=\: \{(u,v)\in\CC^2\mid \re u>2 \textrm{ and } \re v>5\}\:.
\end{equation*}
The function $H_T(g,s_1,s_2)$ is
nonholomorphic (in the variable $g$) apart from $(s_1,s_2)=(0,0)$. One expects the Poincar\'e series to have analytic continuation to the critical point $(s_1,s_2)=(0,0)$, 
equivalently $(u,v)=(1,3)$, which is  holomorphic with respect to $g$.
By the same method as for case $\kappa=4$ (\cite[Sec.~6]{holproj}) we show that indeed this analytic continuation exists along the line $s_1=0$, but that there is a nonholomorphic share.
\begin{thm}\label{analytische_fortsetzung}
 The Poincar\'e series $P_T(\cdot,u,v)$ admit  meromorphic continuation in $L^2(\Gamma\backslash G)_\kappa$ to the cone 
\begin{equation*}
\{(u,v)\in \CC^2\mid \re u>\frac{1}{2},\: \re v>1\}\:.
\end{equation*}
The poles are contained in a finite number of lines $u=\mathop{const.}$ and $v=\mathop{const.}$.
 The limit
\begin{equation*}
 P_T(\cdot,1,3)\::=\: \lim_{u\to 1}  P_T(\cdot,u,2u+1)
\end{equation*}
exists as a function of $L^2(\Gamma\backslash G)_\kappa$. It has a $\mathcal C^\infty$-representative.
Its nonzero spectral components belong to the isotypical components of irreducible representations with infinitesimal characters $\Lambda=(2,1)$ and $\Lambda=(1,0)$.
\end{thm}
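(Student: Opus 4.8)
The plan is to run the resolvent method of \cite[Sec.~6]{holproj}, there established for the regular weight $\kappa=4$, in the present boundary weight $\kappa=3$. The two ingredients are the absolute convergence of $P_T(\cdot,u,v)$ in the cone $A=\{\re u>2,\ \re v>5\}$, where it is an honest $L^2$-function, and the meromorphic resolvents $R_+(u)$ and $R_-(v)$ of the Casimir operators $D_+(u)$, $D_-(v)$ furnished by Proposition~\ref{prop_resolventen}. The point of the construction is that $D_+(u)$ and $D_-(v)$ are precisely the polynomials in $C_1,C_2$ whose Harish-Chandra images $(\Lambda_1^2-u^2)(\Lambda_2^2-u^2)$ and $((\Lambda_1+\Lambda_2)^2-v^2)((\Lambda_1-\Lambda_2)^2-v^2)$ vanish on the leading infinitesimal character of the seed $H_T(\cdot,s_1,s_2)$: the exponents $s_1=\tfrac{v-2u-1}{2}$, $s_2=\tfrac{u-1}{2}$ are tuned so that the power part $\trace(TY)^{s_1}\det(Y)^{s_2}$ carries the parameter $\Lambda=(v-u,u)$, on which both $D_+(u)$ and $D_-(v)$ evaluate to zero identically in $(u,v)$.

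First I would compute $D_+(u)H_T(\cdot,s_1,s_2)$ as a differential operator applied to the seed. Since $D_+(u)$ kills the eigenvalue of the leading power function, what remains is a finite sum of seeds whose powers of $\trace(TY)$ and $\det(Y)$ are strictly lower, so that the corresponding Poincar\'e series $\tilde P_T(\cdot,u,v)$ converges absolutely on a domain extending to $\re u>\tfrac12$. Because $D_+(u)$ commutes with the right $G$-action and hence with the average over $\Gamma_\infty\backslash\Gamma$, one has $D_+(u)P_T=\tilde P_T$ on $A$, whence
\begin{equation*}
 P_T(\cdot,u,v)\;=\;R_+(u)\,\tilde P_T(\cdot,u,v)
\end{equation*}
on $A$; the right-hand side is meromorphic on $\re u>\tfrac12$ and agrees with $P_T$ on the overlap, giving the continuation in $u$. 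Running the same argument with $D_-(v)$ and $R_-(v)$ continues in $v$ to $\re v>1$, and combining the two extends $P_T$ to the full cone. Since the only singularities introduced are the poles of $R_+(u)$ and $R_-(v)$, which sit at fixed spectral parameters, they are confined to finitely many lines $u=\mathop{const.}$ and $v=\mathop{const.}$ inside the cone.

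Next I would analyze the limit along the line $s_1=0$, i.e.\ $v=2u+1$ with $u\to1$, which ends at the critical point $(u,v)=(1,3)$. On this line the only resolvent that can be singular at the endpoint is $R_+(u)$ at $u=1$; by Proposition~\ref{prop_resolventen} its spectral pole locus inside $L^2(\Gamma\backslash G)_{(3,3)}$ is carried by the continuous component $K_{\alpha_1}(1)$ and the two discrete parameters $\Lambda=(2,1)$ and $\Lambda=(0,1)$, and by Proposition~\ref{lemma_kein_kont_spek} the continuous part carries no $(3,3)$-vectors. To see that $P_T(\cdot,u,2u+1)=R_+(u)\tilde P_T(\cdot,u,2u+1)$ stays bounded as $u\to1$, one must check that the defect series $\tilde P_T$ acquires, along this line, a zero on the $\Lambda=(2,1)$ and $\Lambda=(0,1)$ isotypes matching the simple pole of $R_+(u)$, so that the product has a removable singularity and the surviving value is a finite residue pairing. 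Granting this, $P_T(\cdot,1,3)\in L^2(\Gamma\backslash G)_\kappa$; and being $\mathfrak z_\CC$-finite and $K$-finite with scalar $K$-type $(3,3)$, it has a real-analytic, in particular $\mathcal C^\infty$, representative by elliptic regularity.

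Finally, the spectral support of $P_T(\cdot,1,3)$ is contained in the pole locus of $R_+(u)$ at $u=1$ within $L^2(\Gamma\backslash G)_{(3,3)}$. Proposition~\ref{lemma_kein_kont_spek} removes the continuous component, leaving the discrete infinitesimal characters $\Lambda=(2,1)$ and $\Lambda=(0,1)$, the latter Weyl-conjugate to $(1,0)$; this is the claimed list. By Theorem~\ref{lem_diskretes_spek} the associated isotypical pieces are those of the holomorphic discrete series $\pi_{(3,3)}^-$ and of the weight-one representation $\pi_{(1,1)}^{\mathop{hol}}$. The step I expect to be the genuine obstacle is the one in the previous paragraph: proving that $\tilde P_T$ vanishes along $v=2u+1$ to exactly the order of the $R_+(u)$-pole at $u=1$, so that the limit both exists and has a computable, generically nonzero component on each of the two residual representations. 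This matching of orders is automatic for $\kappa\ge4$, but is precisely what becomes delicate at the boundary weight $\kappa=3$, where the second, phantom component $\pi_{(1,1)}^{\mathop{hol}}$ survives.
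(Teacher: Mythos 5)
Your Step~1 (iterated resolvents give the meromorphic continuation to the cone, poles on finitely many lines), your identification of the spectral locus via Propositions~\ref{lemma_kein_kont_spek} and Theorem~\ref{lem_diskretes_spek}, and the elliptic-regularity endgame all match the paper's Steps~1, 3 and~4. The genuine gap is precisely the step you yourself flag and then assume (``Granting this\dots''): the existence of the limit at $(u,v)=(1,3)$ hinges on the defect series vanishing at $u=1$, and you offer no proof of it. Moreover, your framing of that step --- a \emph{simple} pole of $R_+(u)$ met by a matching zero of $\tilde P_T$ on the $\Lambda=(2,1)$ and $\Lambda=(0,1)$ isotypes, so that the limit is ``a finite residue pairing'' --- presupposes control of pole orders (of $P_T$ at $(1,3)$ and of $R_+$ at $u=1$) that is not available a priori. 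The paper fills the gap not by an abstract order-matching argument but by explicit computation: $C_1P$ and $C_2P$ (Magma-verified) are linear combinations of shifted Poincar\'e series, and in the resulting formula (\ref{gleichung_fuer_D_+}) for $D_+(u)P(g,u,v)$ every term except one carries a factor $s_1$. Hence on the line $v=2u+1$, i.e.\ $s_1=0$, the defect collapses to the single term
\begin{equation*}
D_+(u)\,P_T(\cdot,u,2u+1)\:=\:64\pi^2\det(T)(u-1)(u-2)\,P_T(\cdot,u+2,2u+5)\:,
\end{equation*}
whose right-hand side converges at the endpoint, and the zero you need is the visible factor $(u-1)$ (coming from $(v-u-2)\vert_{v=2u+1}=u-1$).

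Even with this factor in hand, the paper does not argue by residues, exactly because pole orders are unknown: it writes $D_+(1)=D_+(u)-(u^2-1)\bigl(C_1-(u^2-4)\bigr)$, iterates to get $D_+(1)^nP(\cdot,u,2u+1)=(u^2-1)^n\bigl(C_1-(u^2-4)\bigr)^nP(\cdot,u,2u+1)+(u-1)\mathcal P(\cdot,u)$ with $\mathcal P$ convergent, chooses $n$ larger than the (finite but uncontrolled) pole order, and concludes $\lim_{u\to1}\lVert D_+(1)^nP(\cdot,u,2u+1)\rVert=0$. The spectral decomposition then kills every discrete component with $D_+(1,\Lambda)\neq 0$ and every continuous component away from $K_{\alpha_1}(1)$ (where $R_+(1)$ is a bounded operator), while on $K_{\alpha_1}(1)$ one needs the extra factorization $D_+(u)=(u^2-1)M_+(u)$ with $M_+(1)$ bounded below, so that $M_+^{-1}(1)$ is bounded. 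Without this (or an equivalent quantitative substitute) your proposal does not establish existence of the limit. Two smaller inaccuracies: it is not true that along $v=2u+1$ only $R_+(u)$ can be singular at the endpoint --- $D_-(3,\Lambda)$ vanishes e.g.\ at $\Lambda=(2,1)$ since $(\Lambda_1+\Lambda_2)^2=9$, so $R_-(v)$ has discrete-spectrum poles at $v=3$; the paper sidesteps this by running Step~2 with $D_+$ alone. And the generic nonvanishing of the two residual components is not part of this theorem; it is proved afterwards via Sturm's operator (Proposition~\ref{prop_sturm_operator} and Theorem~\ref{satz_phantom_proj}).
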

\begin{proof}[Proof of Theorem~\ref{analytische_fortsetzung}]
By abuse of notation, we  omit the dependence on $T$ in our notations, i.e. $P(g,u,v):=P_T(g,u,v)$.
As in \cite{holproj} we use Casimir operators and their resolvents for the continuation.
Their actions depend on the weight $\kappa=3$. For the two Casimir operators $C_1$ and $C_2$ we calulate
\begin{eqnarray*}
C_1( P(g,u,v)) &=& 4(s_1^2+2s_1s_2+2s_2^2+2s_1+3s_2)  P(g,u,v)\\
&& -16\pi (s_1+s_2)P(g,u,v+2)\\
&& -8\det(T) s_1(s_1-1) P(g,u+2,v)\\
&& +32\pi\det(T) s_1P(g,u+2,v+2)\:
\end{eqnarray*}
and
\begin{eqnarray*}
&& C_2( P(g,u,v)) \:=\:\\
&&\hspace*{1cm}\bigl(
17u^4+2v^4-12uv^3+30u^2v^2-36u^3v+15u^2\\
&&\hspace*{6cm}+6v^2-18uv-32\bigr)P(g,u,v)\\
&&\hspace*{1cm}+256\pi^2(s_1+s_2)(s_1+s_2+1)P(g,u,v+4)\\
&&\hspace*{1cm}-128\pi (s_1+s_2)\bigl((s_1+s_2)^2+3(s_1+s_2)+\frac{23}{8}\bigr)P(g,u,v+2)\\
&&\hspace*{1cm}+32\det(T)^2s_1(s_1-1)(s_1-2)(s_1-3)P(g,u+4,v)\\
&&\hspace*{1cm}-256\pi\det(T)^2s_1(s_1-1)(s_1-2)P(g,u+4,v+2)\\
&&\hspace*{1cm}-16\det(T) s_1(s_1-1)(7u^2+3v^2-9uv-u+\frac{7}{2})P(g,u+2,v)\\
&&\hspace*{1cm}+512\pi^2\det(T)^2s_1(s_1-1)P(g,u+4,v+4)\\
&&\hspace*{1cm}-64\pi\det(T) s_1(4u^2-3uv-10u+9v-8)P(g,u+2,v+2)\\
&&\hspace*{1cm}-256\pi^2\det(T) (s_1+s_2)(4s_1+2s_2+1)P(g,u+2,v+4)\:.
\end{eqnarray*}
We used  the computer algebra system Magma to verify these results. 
But for continuation we need to apply  operators which produce Poincar\'e series of better convergence in either $u$ or $v$. These operators are the known $D_+(u)$ and $D_-(v)$, respectively:
\begin{equation*}
 D_+(u)\::=\: \frac{1}{2}\bigl(C_1^2-C_2+11C_1-2(u^2-1)C_1+2(u^2-1)(u^2-4)\bigr)
\end{equation*}
and 
\begin{equation*}
 D_-(v)\::=\: 2C_2-C_1^2-34C_1-2(v^2-9)C_1+(v^2-9)(v^2-1)\:,
\end{equation*}
respectively. We get
\begin{eqnarray}
&& D_+(u) P(g,u,v)\:=\:\label{Omega_groesser_angewandt}\label{gleichung_fuer_D_+}\\
&&\quad\quad\quad+16\det(T)^2s_1(s_1-1)(s_1-2)(s_1-3)P(g,u+4,v)\nonumber\\
&&\quad\quad\quad-128\pi\det(T)^2 s_1(s_1-1)(s_1-2)P(g,u+4,v+2)\nonumber\\
&&\quad\quad\quad+256\pi^2\det(T)^2 s_1(s_1-1)P(g,u+4,v+4)\nonumber\\
&&\quad\quad\quad+8\det(T) s_1(s_1-1)(u+1)(v-2)P(g,u+2,v)\nonumber\\
&&\quad\quad\quad-32\pi\det(T) s_1(6s_1s_2+3s_1+8s_2^2-8)P(g,u+2,v+2)\nonumber\\
&&\quad\quad\quad+64\pi^2\det(T)(v-u-2)(u-2)P(g,u+2,v+4)\:,\nonumber
\end{eqnarray}
respectively,
\begin{eqnarray}
 D_-(v)P(g,u,v)&=& +64\pi^2(v-u)(v-u-2)P(g,u,v+4)\label{Omega_kleiner_angewandt}\label{gleichung_fuer_D_-}\\
&&+32\pi (u-1)(v+1)(v-u-2)P(g,u,v+2)\nonumber\\
&&+128\pi\det(T) s_1(s_1-2)(v+1)P(g,u+2,v+2)\nonumber\\
&&-256\pi^2\det(T) (v-u-2)^2P(g,u+2,v+4)\:.\nonumber
\end{eqnarray}
{\it Step 1. Meromorphic continuation.}
In \cite[sec.~3]{holproj} the spectral poles of the resolvents $R_+(u)$ and $R_-(v)$ of $D_+(u)$ and $D_-(v)$, respectively, were studied.
They exist as meromorphic functions with the following properties.
\begin{prop}\cite[Prop.~3.1, 3.2, 3.3]{holproj}\label{prop_meromorphe_resolventen}
 The resolvent $R_+(u)$ of $D_+(u)$ is meromorphic on $\re u>\frac{1}{2}$. On the $2$-dimensional continuous spectrum, which is included in the parameters $\re \Lambda=0$, it is holomorphic.
 On the $1$-dimensional spectrum it is meromorphic with a finite number of simple poles $u=c$ including $u=1$, which arise from components $K_{\alpha_1}(x)$ or $K_{\alpha_1+2\alpha_2}(c)$.
 On the discrete spectrum $R_+(u)$ is meromorphic with a finite number of poles corresponding to the roots of $(\Lambda_1^2-u^2)(\Lambda_2^2-u^2)$
 for infinitesimal characters $\Lambda=(\Lambda_1,\Lambda_2)$.
 
 The resolvent $R_-(v)$ of $D_-(v)$ is meromorphic on $\re v>1$. On the $2$-di\-men\-sional as well as on the $1$-dimensional continuous spectral components it is holomorphic.
 On the discrete spectrum $R_+(u)$ is meromorphic with a finite number of poles  corresponding to the roots of 
 $((\lambda_1+\Lambda_2)^2-v^2)((\lambda_1-\Lambda_2)^2-v^2)$ for infinitesimal characters $\Lambda=(\Lambda_1,\Lambda_2)$. 
\end{prop}

Iterated application (see~\cite[sec.~4]{holproj}) of the resolvents $R_+(u)$ and $R_-(v)$ to the Poincar\'e series yields their meromorphic continuation as $L^2$-functions
to the largest area on which the resolvents exist, that is to the cone 
\begin{equation*}
\{(u,v)\in \CC^2\mid \re u>\frac{1}{2},\: \re v>1\}\:.
\end{equation*}

{\it Step 2. The $L^2$-limit $P(\cdot,1,3)\::=\: \lim_{u\to 1}  P(\cdot,u,2u+1)$ exists.}
Let
\begin{equation*}
L^2(\Gamma\backslash G)\:=\: L^2_{\re\Lambda=0} (\Gamma\backslash G)\:\bigoplus_{\gamma,c}\:L^2_{\gamma,c}(\Gamma\backslash G)
\:\bigoplus_\Lambda \:L^2_\Lambda(\Gamma\backslash G)
\end{equation*}
be the spectral decomposition of  $L^2(\Gamma\backslash G)$ and denote by
\begin{equation*}
 P_\bullet(\cdot,u,v)
\end{equation*}
the   according spectral components of the Poincar\'e series.
Notice that $(u,v)=(1,3)$ is an inner point of the area of meromorphicity.
We analyze the operator $D_+(u)$.
We choose $v=2u+1$, so the limit series has equation $s_1=0$.
Equation (\ref{Omega_groesser_angewandt}) simplifies on the intersection of $s_1=0$ with the cone of convergence to
\begin{equation*}
D_+(u) P(\cdot,u,2u+1)\:=\:64\pi^2\det(T)(u-1)(u-2)P(g,u+2,2u+5)\:,
\end{equation*}
where $P(\cdot, u+2,2u+5)$ actually is  convergent in $(u,v)=(1,3)$.
As the meromorphic continuation is unique, this  holds everywhere on $s_1=0$.
Equivalently, as $D_+(1)=D_+(u)-(u^2-1)(C_1-(u^2-4))$,
\begin{eqnarray*}
D_+(1) P(\cdot,u,2u+1) &=&
(u^2-1)\bigl(C_1-(u^2-4)\bigr) P(\cdot,u,2u+1)\\
&&+64\pi^2\det(T)(u-1)(u-2)P(\cdot,u+2,2u+5)\:.
\end{eqnarray*}
Thus, 
\begin{eqnarray*}
D_+(1)^n P(\cdot,u,2u+1) &=&
(u^2-1)^n
\bigl(C_1-(u^2-4)\bigr)^n P(\cdot,u,2u+1)\\
&&+(u-1)\mathcal P(\cdot, u)\:,
\end{eqnarray*}
where $\mathcal P(\cdot, u)$  is a symbol for a $\CC[u]$-linear combination of Poincar\'e series which  actually converge in $(u,v)=(1,3)$.
Choosing $n$ greater than the pole order of $P(\cdot,u,v)$ in $(u,v)=(1,3)$, we have
\begin{equation*}
 \lim_{u\to 1}\:\lvert\!\lvert  (u^2-1)^n
\bigl(C_1-(u^2-4)\bigr)^n P(\cdot,u)\rvert\!\rvert\:=\:0
\end{equation*}
as well as
\begin{equation*}
  \lim_{u\to 1}\:\lvert\!\lvert  (u-1)\mathcal P(\cdot, u)\rvert\!\rvert\:=\:0\:.
\end{equation*}
Applying Schwarz' inequality we deduce
\begin{equation*}
  \lim_{u\to 1}\:\lvert\!\lvert   D_+(1)^n P(\cdot,u,2u+1)\rvert\!\rvert^2\:=\:0\:.
\end{equation*}
Written according to the spectral decomposition, 
 \begin{eqnarray*}
 0&=& \sum_\Lambda \lvert D_+(1,\Lambda)\rvert^{2n}\lim_{u\to 1}\lvert\!\lvert  P_\Lambda(\cdot,u,2u+1)\rvert\!\rvert^2\\
&& +\sum_{\gamma,c}\lim_{u\to 1}\lvert\!\lvert  D_+(1)^nP_{\gamma,c}(\cdot,u,2u+1)\rvert\!\rvert^2\\
&&+\lim_{u\to 1}\lvert\!\lvert  D_+(1)^nP_{\re\Lambda=0}(\cdot,u,2u+1)\rvert\!\rvert^2\:.
\end{eqnarray*}
So any single summand is zero:
The limit 
$\lim_{u\to 1}\lvert\!\lvert  P_\Lambda(\cdot,u,2u+1)\rvert\!\rvert$ exits for any discrete parameter $\Lambda$.
 It  is nonzero only if $ D_+(1,\Lambda)=(\Lambda_1^2-1)(\Lambda_2^2-1)$ is zero. 
So the remaining nonzero discrete
components $P_{\Lambda}(\cdot,u,v)$ have parameters $\Lambda=(1,\Lambda_2)$ and are indeed analytically continued
 in $(u,v)=(1,3)$. 
On any continuous component apart from $K_\alpha(1)$, 
the resolvent $R_+(1)$ exists, thus there we have
\begin{eqnarray*}
 \lim_{u\to 1}\lvert\!\lvert  P_{\bullet}(\cdot,u,2u+1)\rvert\!\rvert^2
&=& \lim_{u\to 1}\lvert\!\lvert  R_+(1)^nD_+(1)^nP_{\bullet}(\cdot,u,2u+1)\rvert\!\rvert^2\\
&\leq & \lvert\!\lvert R_+(1)\rvert\!\rvert^{2n}_\bullet\cdot\lim_{u\to 1}
\lvert\!\lvert  D_+(1)^n P_{\bullet}(\cdot,u,2u+1)\rvert\!\rvert^2
=0\:.
\end{eqnarray*}
On the component $K_\alpha(1)$ we have $D_+(u)=(u^2-1)M_+(u)$, where $M_+(u)$ can be parametrized by $(u^2+t^2)$ and is bounded from below by $u^2$.
So from
\begin{equation*}
 0\:=\: \lim_{u\to 1}\lvert\!\lvert  D_+(1)^nP_{\alpha,1}(\cdot,u,2u+1)\rvert\!\rvert  \: =\:\lim_{u\to 1}(u^2-1)^n \lvert\!\lvert M_+(1)^nP_{\alpha,1}(\cdot,u,2u+1)\rvert\!\rvert
\end{equation*}
we deduce as before that $\lvert\!\lvert M_+(1)P_{\alpha,1}(\cdot,u,2u+1)\rvert\!\rvert$ exists.
As the resolvent $M_+^{-1}(1)$ is an operator bounded by $u^{-1}=1$, the limit

\begin{eqnarray*}
 \lim_{u\to 1}\lvert\!\lvert  P_{\alpha,1}(\cdot,u,2u+1)\rvert\!\rvert^2
&=& \lim_{u\to 1}\lvert\!\lvert  M_+(1)^{-n}M_+(1)^nP_{\alpha,1}(\cdot,u,2u+1)\rvert\!\rvert^2\\
&\leq & \lvert\!\lvert M_+^{-1}(1)\rvert\!\rvert^{2n}_{\alpha,1}\cdot\lim_{u\to 1}
\lvert\!\lvert  M_+(1)^n P_{\alpha,1}(\cdot,u,2u+1)\rvert\!\rvert^2
\:
\end{eqnarray*}
exists.
So the limit $P(\cdot,1,3):=\lim_{u\to 1}P(\cdot,u,2u+1)$ exists as an $L^2$-function.
\bigskip

{\it Step 3. The spectral  locus of $P(\cdot,u,v)$ in $(1,3)$.}
We examine the possible  spectral components left from Step~2. 
Within the continuous spectrum, the only remaining component is indexed by $\Lambda=(1,i\RR)=K_{\alpha_1}(1)$. But by Proposition~\ref{lemma_kein_kont_spek}, in this component the $K$-type 
$\kappa=(3,3)$ does not occur. As the $K$-type is passed on the continuation, the  $P(\cdot,1,3)$ has weight $\kappa$. So its continuous spectral component is
identically zero.
The remaining discrete spectral components are indexed by $\Lambda=(\Lambda_1,1)$. By Proposition~\ref{lem_diskretes_spek}, only two components occur within $L^2(\Gamma\backslash G)_\kappa$:
The holomorphic discrete series representation $\pi_\kappa$ of minimal $K$-type $\kappa=(3,3)$ and of infinitesimal character $\Lambda=(2,1)$, and a non-discrete series representation
of infinitesimal character $\Lambda=(0,1)$.

{\it Step 4. There is a $C^\infty$-representative.}
The limit $P(\cdot,1,3)$ is the solution of an elliptic differential equation with $C^\infty$-coefficients. So itself is $C^\infty$ by regularity theory.
\end{proof}

\section{Phantom holomorphic projection}\label{holomorphe_projektion}
\subsection{Differential operators}\label{diffoperators}

Let $\rho$ be an irreducible unitary representation of $U(m)$ on a finite dimensional vector space $V_\rho$. 
By the homomorphism $J:G\to\GL_m(\CC)$, $J(g)=(ci+d)$, we get an isomorphism $\psi=J\mid_K:K\tilde\to U_m(\CC)$. So $\rho(g):=\rho\circ\psi(g)$ is an irreducible unitary representation of $K$.
As $\rho$ is the restriction of an irreducible representation of $\GL_m(\CC)$, the element $J_\rho(g)=\rho(ci+d)\in\mathop{Aut}(V_\rho)$ is well-defined for all $g\in G$.

In the following we make use of formulas developed in \cite[\S 3]{weissauersLN}. The notation there is according to the choice of the isomorphism $K\cong U(m)$
given by the complex conjugate $\bar\psi$ of $\psi$. This implies to work instead of $\rho$ with the representation $\rho(\bar\psi(g))=\rho(\bar g)$ on $V_\rho$, which
is isomorphic to the contragredient representation $\rho^\ast\circ\psi$ of $K$. So we must carfully replace $\rho^\ast$ by $\rho$ in some of the formulas in \cite[\S 3]{weissauersLN}.

For the Siegel upper halfspace $\mathcal H=G/K$ of genus $m$ let $C^\infty(\mathcal H, V_\rho)$ be the space of $C^\infty$-functions  on $\mathcal H$ with values in the space $V_\rho$ of 
the $K$-representation $\rho$. We have the isomorphism \cite[p. 30]{weissauersLN}
\begin{eqnarray*}
 C^\infty(G,V_{\rho\circ\bar\psi})^K&\tilde\to& C^\infty(\mathcal H, V_\rho)\:,\\
 f(g)&\mapsto& J_\rho(g)f(g)\:.
\end{eqnarray*}
Here $C^\infty(G,V_\rho)^K$ is the subspace of $K$-invariant functions in  $C^\infty(G,V_\rho)=C^\infty(G)\otimes V_\rho$, on which $K$ acts by right translations $R_gf(x)=f(xg)$.
By Schur's lemma $C^\infty(G,V_\rho)^K$ is the $K$-isotypical component for the representation $(\rho\circ\bar\psi)^\ast$. But this is isomorphic to $\rho\circ\psi$ and
we can identify $C^\infty(G,V_\rho)^K$ with $C^\infty(G)_\rho$, the $\rho$-isotypical component of $C^\infty(G)$ on which $K$ acts by right translations. So we get isomorphisms
\begin{equation*}
 \phi_\rho\::\:C^\infty(G)_\rho\:\tilde\to\: C^\infty(\mathcal H, V_\rho)\:.
\end{equation*}
The universal envelopping algebra $\mathfrak U(\gg_\CC)$ of the complex Lie algebra $\gg_\CC$ of $G$ acts from the right on $C^\infty(G)$ and this action commutes with the left action of $G$.
The abelian Lie algebra $\pp_+$ (respectively $\pp_-$) can be identified with the holomorphic (respectively antiholomorphic) tangent space of $\mathcal H$ in $Z=iE$.
So $\mathfrak U(\pp_-)$ acts on $C^\infty(G)$ by leftinvariant differential operators.
\begin{lem}\label{pplus_hoechstgewichte}
For the adjoint representation of $K$ on $\mathfrak U(\mathfrak p_+)$ it holds
 \begin{equation*}
  \mathfrak U(\mathfrak p_+)\:\cong\:\Symm^\bullet\Symm^2(\CC^m) \:=\: \bigoplus_{\rho_k} V_{\rho_k}\:,
 \end{equation*}
where $\rho_k$ runs through the irreducible repesentations of $K$ of highest weight $k=(k_1,\dots,k_m)$ for $k_i\in 2\ZZ$ and $k_1\geq k_2\geq\dots\geq k_m\geq 0$.
\end{lem}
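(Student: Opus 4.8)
The plan is to reduce the assertion to a single classical plethysm and then decompose it. First I would use that $\mathfrak p_+$ is \emph{abelian} (as recorded above): the Poincar\'e--Birkhoff--Witt symmetrization map then gives a $K$-equivariant algebra isomorphism $\mathfrak U(\mathfrak p_+)\cong\Symm^\bullet(\mathfrak p_+)$, the adjoint action being carried to the natural action on the symmetric algebra. Next I would identify $\mathfrak p_+$ as a $K\cong U(m)$-module. From the explicit shape $\mathfrak p_+=\bigl\{\begin{pmatrix}X& iX\\ iX&-X\end{pmatrix}:X'=X\bigr\}$ the underlying space is that of complex symmetric $m\times m$ matrices, on which $K$ acts (in the conventions fixed above) as $\Symm^2$ of the defining representation; for $m=2$ the three weights $(2,0),(1,1),(0,2)$ appearing in the root-space list are exactly those of $\Symm^2(\CC^2)$. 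Thus $\mathfrak p_+\cong\Symm^2(\CC^m)$ and $\mathfrak U(\mathfrak p_+)\cong\Symm^\bullet\Symm^2(\CC^m)$ as $K$-modules, and everything reduces to decomposing the latter.

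For the decomposition I would use a character (Hilbert series) computation. The weights of $\Symm^2(\CC^m)$ are the monomials $x_ix_j$ with $1\le i\le j\le m$, so the graded character of $\Symm^\bullet\Symm^2(\CC^m)$ is
\begin{equation*}
 \prod_{1\le i\le j\le m}\frac{1}{1-x_ix_j}\:.
\end{equation*}
By Littlewood's identity this equals $\sum_{\lambda}s_\lambda(x_1,\dots,x_m)$, where $\lambda$ runs over the partitions with at most $m$ parts all of which are even, each Schur polynomial occurring with coefficient one. Since $s_\lambda$ is the character of the irreducible $K$-representation of highest weight $\lambda$, this is precisely the multiplicity-free decomposition
\begin{equation*}
 \Symm^\bullet\Symm^2(\CC^m)\:=\:\bigoplus_{\rho_k}V_{\rho_k}\:,\qquad k=(k_1,\dots,k_m),\ k_i\in2\ZZ,\ k_1\ge\cdots\ge k_m\ge0\:,
\end{equation*}
which is the claim.

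A second, more hands-on route that also yields explicit highest-weight vectors (potentially useful for the later differential-operator computations) is geometric. Multiplicity-freeness follows from the sphericity of the symmetric space $\GL_m/O(m)$ — equivalently, the fact that a Borel of $\GL_m$ has a dense orbit on the symmetric matrices — via the Vinberg--Kimelfeld criterion. The extreme weight vectors are then the monomials $\prod_{j=1}^m d_j^{a_j}$ in the leading principal minors $d_j$ of the symmetric matrix: for a diagonal torus element $a=\mathrm{diag}(a_1,\dots,a_m)$ one has $d_j(aSa^{t})=(a_1\cdots a_j)^2\,d_j(S)$, so $d_j$ has torus weight $2(e_1+\cdots+e_j)$, and one checks invariance under the unipotent radical. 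Letting the exponents $a_j\ge0$ vary realizes each dominant weight $2\lambda$ exactly once, recovering the same list of highest weights.

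The heart of the matter, and the only nonformal step, is the decomposition itself: neither the multiplicity-freeness nor the precise even-weight constraint is automatic. In the first route all the content sits in Littlewood's identity, whose standard proof is a Cauchy-type bialternant expansion (or an RSK bijection specialized to symmetric matrices); in the second it sits in verifying sphericity and that the minors exhaust the highest-weight vectors. Both are classical, so in the write-up I would cite the identity rather than reprove it, and simply carry out the two elementary reductions of the first paragraph.
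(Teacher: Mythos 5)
Your proof is correct, and it takes a genuinely different route from the paper -- indeed the paper contains no argument at all: its proof of Lemma~\ref{pplus_hoechstgewichte} consists of citing \cite[Lemma~3]{weissauersLN} ``with respect to the change from $\bar\psi$ to $\psi$''. Your first route (PBW symmetrization, legitimate since $\mathfrak p_+$ is abelian; the identification $\mathfrak p_+\cong\Symm^2(\CC^m)$; and Littlewood's identity $\prod_{i\le j}(1-x_ix_j)^{-1}=\sum_{\lambda\text{ even}}s_\lambda$) supplies exactly the content that this citation hides, and your second route via sphericity of $\GL_m/O(m)$ buys something neither the paper nor the cited lemma records, namely explicit highest-weight vectors as monomials in the leading principal minors -- potentially useful for the differential-operator computations later in the paper. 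What the paper's citation buys, conversely, is the convention bookkeeping, and that is the one place where your write-up slips: in the paper's root-space list the weights $(2,0),(1,1),(0,2)$ are carried by $\mathfrak p_-$, not $\mathfrak p_+$ (the positive root spaces are chosen inside $\CC B_{12}+\mathfrak p^-$), so under the isomorphism $\psi$ the adjoint module $\mathfrak p_+$ has the negated weights and is the \emph{contragredient} of $\Symm^2(\CC^m)$. Since the list of highest weights $k_1\ge\cdots\ge k_m\ge 0$ with $k_i\in2\ZZ$ is not stable under duality, this sign is not cosmetic; it is precisely the $\bar\psi$-versus-$\psi$ discrepancy the paper flags at the start of \S\ref{diffoperators}.

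The repair is one line: state that the weights of $\mathfrak p_+$ are measured in the $\bar\psi$-normalization (equivalently, run your weight computation on $\mathfrak p_-$ and dualize at the end). The heart of your argument -- the multiplicity-free Littlewood decomposition of $\Symm^\bullet\Symm^2(\CC^m)$ -- is unaffected by this relabeling, and with that adjustment both of your routes are complete proofs of the statement.
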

\begin{proof}[Proof of Lemma~\ref{pplus_hoechstgewichte}]
This is \cite[Lemma~3]{weissauersLN} with respect to the change from $\bar\psi$ to $\psi$. 
\end{proof}
The representation of $K$ on $ \mathfrak U(\mathfrak p_-)$ is  dual to that on $\mathfrak U(\mathfrak p_+)$.
For any irreducible representation $\rho$ in  $\mathfrak U(\mathfrak p_+)$ there are operators $E_+^\rho$ on the $\rho$-isotypical component of  $\mathfrak U(\mathfrak p_+)$, 
respectively $E_-^\rho$ in $\mathfrak U(\mathfrak p_-)$ on the $\rho^\ast$-isotypical component of $\mathfrak U(\mathfrak p_-)$ \cite[p.43f.]{weissauersLN}. 
They map
a $K$-isotypical subspace $C^\infty(G)_\tau$ of $C^\infty (G)$ to the direct sum of $K$-isotypical subspaces $C^\infty(G)_{\tilde\tau}$, where
\begin{equation*}
 \rho\otimes \tau\:=\:\bigoplus \tilde\tau\:,
\end{equation*}
respectively $\rho^\ast\otimes \tau$ in case $E_-^{\rho}$.
We get the Maass operators $E_+$ respectively $E_-$ by choosing $\rho=\rho_k$ of highest weight $k=(2,0,\dots,0)$.
More generally, we get Maass operators $E_+^{[\mu]}$ respectively $E_-^{[\mu]}$ by choosing 
\begin{equation*}
 k=(2,\dots,2,0,\dots,0)
\end{equation*}
where $\mu$ is  the number of $2$s  occurring.
By the above identifications $\phi_\tau$ and $\phi_{\rho\otimes\tau}=\bigoplus_{\tilde\tau}\phi_{\tilde\tau}$ we have the commutative diagramm
\begin{center}
\begin{tikzcd}
  C^\infty(G)_\tau\arrow{d}{E_+^\rho} \arrow{r}{\phi_\tau} & C^\infty(\mathcal H,V_\tau) \arrow{d}{\Delta_+^\rho} \\
   C^\infty(G)_{\rho\otimes\tau}      \arrow{r}{\phi_{\rho\otimes\tau}} & C^\infty(\mathcal H,V_{\rho\otimes\tau})
\end{tikzcd},
\end{center}
respectively
\begin{center}
\begin{tikzcd}
  C^\infty(G)_\tau\arrow{d}{E_-^\rho} \arrow{r}{\phi_\tau} & C^\infty(\mathcal H,V_\tau) \arrow{d}{\Delta_-^\rho} \\
   C^\infty(G)_{\rho^\ast\otimes\tau}      \arrow{r}{\phi_{\rho^\ast\otimes\tau}} & C^\infty(\mathcal H,V_{\rho^\ast\otimes\tau})
\end{tikzcd}.
\end{center}
An explicit description of the operators $\Delta_+^{[\mu]}$ respectively $\Delta_-^{[\mu]}$ is found in \cite[pp. 33, 44]{weissauersLN}. 
$\Delta_+^{[\mu]}$ acts on $C^\infty(\mathcal H,V_\tau)=C^\infty(\mathcal H)\otimes V_\tau$ with values in $C^\infty(\mathcal H)\otimes V_{(2,\dots,2,0,\dots,0)}\otimes V_\tau$ 
For $h\in C^\infty(\mathcal H)$ and $v\in V_\tau$   it is defined up to a factor $2^\mu$ by
\begin{equation*}
 \Delta_+^{[\mu]}\left(h(Z)\otimes v\right)\:=\:
 (2i)^\mu(\tau\otimes \mathop{det}\nolimits^{\frac{1-\mu}{2}})(Y^{-1})\cdot \partial_Z^{[\mu]}\left((\tau\otimes\mathop{det}\nolimits^{\frac{1-\mu}{2}})(Y) h(Z)\otimes v\right)\:.
\end{equation*}
Here $\partial_Z=\frac{1}{2}(\partial_X-i\partial_Y)$ is the matrix valued operator with components $\frac{1}{2}(1+\delta_{ij})\frac{\partial}{\partial_{Z_{ij}}}$ for the symmetric
matrix $Z=X+iY\in\mathcal H$ with components $Z_{ij}$, and for a matrix $M$ let
$M^{[\mu]}=\bigwedge^\mu(M)$ be the matrix of the $\mu$-th exterior power of $M$, i.e. the $\binom{m}{\mu}\times\binom{m}{\mu}$-matrix of the minors of $M$ of size $\mu$ 
(see~\cite[p. 208ff]{freitag}).

Analogously, $\Delta_-^{[\mu]}$ acts on on $C^\infty(\mathcal H,V_\tau)=C^\infty(\mathcal H)\otimes V_\tau$ with values in $C^\infty(\mathcal H)\otimes V_{(0,\dots,0,-2,\dots,-2)}
\otimes V_\tau$ via
\begin{equation*}
 \Delta_-^{[\mu]}\left(h(Z)\otimes v\right)\:=\: \Delta_-^{[\mu]}\left(h(Z)\right)\otimes v\:
\end{equation*}
for $h\in C^\infty(\mathcal H)$ and $v\in V_\tau$ and is defined up to a factor $2^\mu$ by
\begin{equation*}
 \Delta_-^{[\mu]}\left(h(Z)\right)\:=\:
 (-2i)^\mu Y^{[\mu]}(\mathop{det}\nolimits^{\frac{1-\mu}{2}})(Y^{-1})\cdot \bar\partial_{Z}^{[\mu]}\left((\mathop{det}\nolimits^{\frac{1-\mu}{2}})(Y)\cdot h(Z)\right)\cdot Y^{[\mu]}\:.
\end{equation*}

\subsection{Sturm's operator} 
Let   
$\mathcal Y=\{Y=Y'\in M_m(\RR)\mid Y>0\}$ be the space of  positive definite symmetric matrices and let 
$\mathcal X=\{X=X'\in M_m(\RR)\mid \lvert X_{jk}\rvert\leq \frac{1}{2}, 1\leq j,k\leq m\}$ be a stripe of width one. 
Let the genus $m$ equal two.
Let $F\in \tilde{\mathcal M}_{\kappa}(\Gamma)$ be a (nonholomorphic modular) form of bounded growth (see \cite[2.4]{panchishkin})) of weight $\kappa$.
Let $f(g)=J_\kappa(g,i)^{-1}F(gi)$  be the preimage of $F$ under $\phi_\rho$ for $\rho=(\kappa,\kappa)$. Define correspondingly 
$e_{T}(g)=J_\kappa(g,i)^{-1}\exp(2\pi i\trace(Tgi))$.
Sturm's operator for weight $\kappa$ for  positive definite $T$ is
\begin{equation*}
 F\:\mapsto\:a(T)\::=\:c(\kappa)^{-1}\det(T)^{\kappa-\frac{3}{2}}\int_{\Gamma_\infty \backslash G/K} f(g)\overline{e_T(g)}~dg\:.
\end{equation*}
Here $c(\kappa)=\sqrt \pi(4\pi)^{3-\kappa}\Gamma(\kappa-\frac{3}{2})\Gamma(\kappa-2)$ (\cite[p. 84]{panchishkin}) is chosen such that 
Sturm's operator is the identity on Fourier coefficients of holomorphic forms.
Up to a constant  Sturm's operator is the invariant pairing of $F$ with 
$\exp(2\pi i\trace(TZ))$ of level $\kappa$,
\begin{equation*}
F\:\mapsto\: a(T)\:=\:c(\kappa)^{-1}\det(T)^{\kappa-\frac{3}{2}}\int_{\Gamma_\infty\backslash\mathcal H} F(Z)\exp(-2\pi i\trace(TZ))\det(Y)^{\kappa}dv_Z\:,
\end{equation*}
where  $dv_Z=\frac{dX}{\det(Y)^{3/2}}\frac{dY}{\det(Y)^{3/2}}$ is the invariant measure on $\mathcal H$ such that $dg=dv_Zdk$, and $dY=\prod_{k\leq l}dY_{kl}$ 
as well as   $dX=\prod_{k\leq l}dX_{kl}$. Replacing $F$ by its Fourier expansion
\begin{equation*}
 F(Z)\:=\:\sum_{T=T'}A(T,Y)\exp({2\pi i\trace(T X)})
\end{equation*}
we get
\begin{equation*}
a(T)\:=\:c(\kappa)^{-1}\int_{\mathcal Y} A(T,Y)\exp(-2\pi\trace(TY))\det(TY)^{\kappa-\frac{3}{2}}\frac{dY}{\det(Y)^{\frac{3}{2}}}\:.
\end{equation*}
In case $\kappa\geq 4$ Sturm's operator 
\begin{equation*}
 S: F(Z)\:\mapsto\: \sum a(T)\exp(2\pi\trace(TZ))
\end{equation*}
realizes the orthogonal projection to the holomorphic part of $F$. 
The  Fourier expansion
\begin{equation*}
 \tilde F(Z)\:=\:\sum_{T>0}a(T)e^{2\pi i\trace(T Z)}
\end{equation*}
gives rise to a holomorphic cuspform $\tilde F\in[\Gamma,\kappa]_0$ of weight $\kappa$, and for all $f\in[\Gamma,\kappa]_0$ it holds
\begin{equation*}
 \langle F,f\rangle\: =\: \langle \tilde F,f\rangle\:,
\end{equation*}
where $\langle \cdot,\cdot\rangle$ is the scalar product of the Hilbert space $L^2_\kappa(\Gamma\backslash \mathcal H)$.
This is shown in \cite{holproj} (see also \cite{panchishkin} in case $\kappa\geq 5$) by using Poincar\'e series of weight $\kappa$ defined analogously 
to ours which are holomorphic cuspforms for $\kappa\geq 4$.  

We are interested in the action of Sturm's operator on images of $\Delta_+^{[\mu]}$ in the special case  $\mu=2$. So let $k=(2,2)$ and $\rho_k=\mathop{det}^2$ then
\begin{equation*}
 \Delta_+^{[2]}\::\:C^\infty(\mathcal H_2,V_\tau)\:\to\:C^\infty(\mathcal H_2,V_{\tau\otimes\mathop{det}\nolimits^2})\:,
\end{equation*}
and $\Delta_+^{[2]}\circ\phi_\tau=\phi_{\tau\otimes \mathop{det}\nolimits^2}\circ\det(E_+^{[2]})$,  is explicitly given on $\mathcal H$ by
\begin{equation*}
 \Delta_+^{[2]}(h)(Z)\:=\:(2i)^2(\tau\otimes \mathop{det}\nolimits^{-\frac{1}{2}})(Y^{-1})\mathop{det}(\partial_Z)\left((\tau\otimes\mathop{det}\nolimits^{-\frac{1}{2}})(Y)h(Z)\right)\:.
\end{equation*}
\begin{prop}\label{prop_sturm_operator}
Let  $h\in[\Gamma,k]_0$ be a holomorphic cuspform on $\mathcal H$ of weight $(k,k)$, where $\kappa=k+2$.
Sturm's operator applied to the Fourier coefficients  of the nonholomorphic image $\Delta_+^{[2]}(h)$  is zero in case $\kappa\geq 4$. 
But for $\kappa=3$ it does not vanish.
\end{prop}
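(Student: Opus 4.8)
The plan is to make both assertions rest on a single matrix Gamma integral, so that the dichotomy $\kappa\geq 4$ versus $\kappa=3$ becomes exactly the question of whether that integral lies inside its half-plane of convergence or on its boundary. Throughout put $\beta=k-\tfrac12=\kappa-\tfrac52$ and expand the cusp form as $h(Z)=\sum_{S>0}b(S)\exp(2\pi i\,\trace(SZ))$. Since $\tau=\det^{k}$, the formula for $\Delta_+^{[2]}$ in the scalar case reads $\Delta_+^{[2]}(h)=-4\det(Y)^{-\beta}\det(\partial_Z)\bigl(\det(Y)^{\beta}h\bigr)$ with $\det(\partial_Z)=\partial_{Z_{11}}\partial_{Z_{22}}-\tfrac14\partial_{Z_{12}}^{2}$.

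First I would evaluate $\Delta_+^{[2]}(h)$ termwise. Applying $\det(\partial_Z)$ to each product $\det(Y)^{\beta}\exp(2\pi i\,\trace(SZ))$ and splitting by the Leibniz rule into the part where both derivatives hit the exponential, the mixed part, and the part where both hit $\det(Y)^{\beta}$, one uses $\partial_{Z_{ij}}\det(Y)^{\beta}=-\tfrac{i}{2}\partial_{Y_{ij}}\det(Y)^{\beta}$ together with the pure-$Y$ identity $\det(\partial_Z)\det(Y)^{\beta}=-\tfrac18\beta(2\beta+1)\det(Y)^{\beta-1}$. This yields the Fourier coefficient
\begin{equation*}
 A(T,Y)=b(T)\Bigl(16\pi^{2}\det(T)-4\pi\beta\,\frac{\trace(TY)}{\det(Y)}+\frac{\beta(2\beta+1)}{2\det(Y)}\Bigr)e^{-2\pi\trace(TY)}.
\end{equation*}
The two correction terms carry a factor $\det(Y)^{-1}$, and this is the source of the anomaly.

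Next I would insert $A(T,Y)$ into Sturm's integral, whose integrand then contains $e^{-4\pi\trace(TY)}\det(T)^{\kappa-3/2}\det(Y)^{\kappa-3}$. For $\kappa=3$ the correction terms make this diverge on the boundary of the cone $\mathcal{Y}$, and the correct regularization is the one the Poincar\'e series already supplies, namely the factor $\det(Y)^{s_2}$ with $s_2=\tfrac{u-1}{2}\to 0$ along the line $s_1=0$. Writing $s=s_2$ and $s''=\kappa-\tfrac52+s$, the three pieces are evaluated by the genus-two formula $\int_{Y>0}e^{-\trace(AY)}\det(Y)^{s'-3/2}\,dY=\Gamma_2(s')\det(A)^{-s'}$ with $\Gamma_2(s')=\pi^{1/2}\Gamma(s')\Gamma(s'-\tfrac12)$ and $A=4\pi T$, the middle one after differentiating in a scaling parameter to produce $\trace(TY)$. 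Using $\Gamma_2(s''+1)=s''(s''-\tfrac12)\Gamma_2(s'')$ I expect all three contributions to collapse into a single term,
\begin{equation*}
 a(T)=c(\kappa)^{-1}b(T)\det(T)^{\kappa-3/2}\; s\bigl(s-\tfrac12\bigr)\,\Gamma_2(s'')\,(4\pi)^{-2s''}\det(T)^{-s''},\qquad s\to 0.
\end{equation*}

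The conclusion is then immediate and is the heart of the matter. For $\kappa\geq 4$ one has $s''\to\kappa-\tfrac52\geq\tfrac32$, so $\Gamma_2$ stays finite while the prefactor $s(s-\tfrac12)$ kills the expression; hence $a(T)=0$ and Sturm's operator annihilates $\Delta_+^{[2]}(h)$. For $\kappa=3$ the argument $s''=\tfrac12+s$ lands on the edge of convergence, where $\Gamma_2(\tfrac12+s)=\pi^{1/2}\Gamma(\tfrac12+s)\Gamma(s)$ has a simple pole cancelling the zero of $s(s-\tfrac12)$; since $s\Gamma(s)\to 1$ one finds $\lim_{s\to 0}s(s-\tfrac12)\Gamma_2(\tfrac12+s)=-\tfrac{\pi}{2}$, so that $a(T)$ is a nonzero multiple of $\det(T)\,b(T)$ and does not vanish. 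The routine but error-prone step is the differential-operator computation of $A(T,Y)$; the genuinely delicate point, on which I would spend the most care, is the regularization. One must insist that the value of Sturm's operator at the critical weight is the $s_2\to 0$ (equivalently $u\to 1$) limit furnished by the Poincar\'e continuation at fixed $\kappa=3$, and not analytic continuation in $\kappa$ with $\beta$ tied to it: the latter makes the bracket vanish identically and would wrongly return $0$ also at $\kappa=3$. The nonvanishing is exactly the residue extracted from the pole of the genus-two Gamma factor, which sits on the boundary of its domain of convergence precisely at weight three.
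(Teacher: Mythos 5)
Your proposal is correct and takes essentially the same route as the paper's own proof: you compute the Fourier coefficients of $\Delta_+^{[2]}(h)$ (your bracket agrees term by term with the paper's formula (\ref{derivative_fourier_coeff})), insert them into the $s$-regularized Sturm integral, and evaluate by the genus-two Gamma integrals of Lemma~\ref{lemma_gamma_level_2}, arriving at the same collapse to $s(s-\frac{1}{2})\sqrt{\pi}\,\Gamma(s+k-\frac{1}{2})\Gamma(s+k-1)$, whose zero at $s=0$ for $k\geq 2$ and pole cancellation via $s\Gamma(s)\to 1$ at $k=1$ yield exactly the paper's dichotomy and nonzero limit $-\frac{1}{2}\Gamma(\frac{1}{2})$ up to the factored constants. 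The only differences are cosmetic: a direct Leibniz expansion of $\det(\partial_Z)$ in place of the paper's Lemmas~\ref{lemma_freitag_1} and~\ref{lemma_freitag_2}, and the $\Gamma_2$ notation for the matrix Gamma integral.
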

\begin{proof}[Proof of Proposition~\ref{prop_sturm_operator}]
We apply $\Delta_+^{[2]}$  to a holomorphic cuspform $h\in[\Gamma,k]_0$ on $\mathcal H_2$ of weight $(k,k)$. So $V_\tau=\CC$ and
\begin{equation*}
 \tilde h(Z)\:=\:\Delta_+^{[2]}(h)(Z)\:=\: -4\det(Y^{-1})^{k-\frac{1}{2}}\mathop{det}(\partial_Z)\left(\det(Y)^{k-\frac{1}{2}}h(Z)\right)
\end{equation*}
is a function on $\mathcal H_2$. 
(This formula for Maass' operator is also due to Shimura~\cite{shimura} and can be found in \cite[3.1]{panchishkin}.)
It has $K$-type $(k+2,k+2)$ and belongs to the automorphic representation generated by the holomorphic cuspform $h$.
Let
\begin{equation*}
 h(Z)\:=\:\sum_{T=T'}a(T)\exp(2\pi i\trace(TZ))
\end{equation*}
be its Fourier expansion. The Fourier coefficients $b(T,Y)$ of the function 
\begin{equation*}
 \tilde h(Z)\:=\: b(T;Y)\exp(2\pi i\trace(TZ))
\end{equation*}
are 
\begin{equation*}
 -4a(T)\det(Y)^{\frac{1}{2}-k}\mathop{det}(\partial_Z)\left(\det(Y)^{k-\frac{1}{2}}\exp(2\pi i\trace(TZ))\right)\cdot \exp(-2\pi i\trace(TZ))\:.
\end{equation*}
Let
\begin{equation*}
 A(T,Y)\:=\: b(T,Y)\exp(-2\pi \trace(TY))
 \end{equation*}
 be the  coefficient  in the Fourier expansion  in $X$ only. For Sturm's formula we study whether the limit
\begin{equation}\label{sturm_limit}
 \lim_{s\to 0}\int_{\mathcal Y}A(T,Y)\exp(-2\pi\trace(TY))\det(TY)^{k+2+s-\frac{3}{2}}\frac{dY}{\det(Y)^{\frac{3}{2}}}
\end{equation}
is zero.
We make use of the Lemmas~\ref{lemma_freitag_1}, \ref{lemma_freitag_2}, and \ref{lemma_gamma_level_2} below.
Applying Lemma~\ref{lemma_freitag_1} to the functions $f(Z)=\det(Y)^{k-\frac{1}{2}}$ and $g(Z)=\exp(2\pi i\trace(TZ))$ we get
\begin{eqnarray*}
 \mathop{det}(\partial_Z)\left(\det(Y)^{k-\frac{1}{2}}\exp(2\pi i\trace(TZ))\right)&=&
  -\frac{1}{4}C_2(k-\frac{1}{2})\det(Y)^{k-\frac{3}{2}}g\\
  && 
  -\frac{i}{2}C_1(k-\frac{1}{2})\det(Y)^{k-\frac{3}{2}}\trace(Y2\pi iT)g\\
   && 
  +\det(Y)^{k-\frac{1}{2}}(2\pi i)^2\det(T)g\:,
\end{eqnarray*}
where $C_2(k-\frac{1}{2})=(k-\frac{1}{2})k$ and $C_1(k-\frac{1}{2})=k-\frac{1}{2}$ by Lemma~\ref{lemma_freitag_2}.
So the Fourier coefficient $b(T,Y)$  is
\begin{equation}\label{derivative_fourier_coeff}
(4\pi)^2a(T)\det(T)\left(
 \frac{k(k-\frac{1}{2})}{(4\pi)^2}\det(TY)^{-1}-\frac{(k-\frac{1}{2})}{4\pi}\det(TY)^{-1}\trace(YT)+1\right)\:.
\end{equation}
By a change of variables
\footnote{The integral is $\int_{\mathcal Y}b(T;Y)\exp(-4\pi\trace(TY))\det(TY)^{k-1/2+s}\frac{dY}{\det(Y)^{3/2}}$, which equals
$(4\pi)^2a(T)\det(T)\int_{\mathcal Y}\left(k(k-1/2)(4\pi)^{-2}\det(TY)^{-1}-(k-1/2)(4\pi)^{-1}\det(TY)^{-1}\trace(TY)+1\right)$ 
$\times \exp(-4\pi\trace(TY))\det(TY)^{k-1/2+s}\frac{dY}{\det(Y)^{3/2}}$.
For the change of variables $Y\mapsto 4\pi T^{\frac{1}{2}}Y T^{\frac{1}{2}}$ this equals
$(4\pi)^{-2(k+1/2+s-1)}a(T)\det(T)\sqrt{\pi}s(s-1/2)\Gamma(s+k-1/2)\Gamma(s+k-1)$}
$Y\mapsto 4\pi T^{\frac{1}{2}}Y T^{\frac{1}{2}}$, for the limit (\ref{sturm_limit}) we have to compute  the  integral 
\begin{equation*}
\int_{\mathcal Y}\left(k(k-\frac{1}{2})\det(Y)^{-1}
-(k-\frac{1}{2})\det(Y)^{-1}\trace(Y)+1\right)\exp(-\trace(Y))\det(Y)^{k+\frac{1}{2}+s}\frac{dY}{\det(Y)^\frac{3}{2}}
\end{equation*}
up to the factor $(4\pi)^{-2(k-\frac{1}{2}+s)}a(T)\det(T)$,
which by Lemma~\ref{lemma_gamma_level_2} is given (up to a factor $\sqrt\pi$) by
\begin{eqnarray*}
 &&k(k-\frac{1}{2})\Gamma(s+k-\frac{1}{2})\Gamma(s+k-1)\\
 &&\hspace*{1cm}-2(k-\frac{1}{2})(s+k-\frac{1}{2})\Gamma(s+k-\frac{1}{2})\Gamma(s+k-1)+\Gamma(s+k+\frac{1}{2})\Gamma(s+k)\:.
\end{eqnarray*}
But this equals
\begin{equation*}
 s(s-\frac{1}{2})\Gamma(s+k-\frac{1}{2})\Gamma(s+k-1)\:.
\end{equation*}
So for all $k>1$ respectively $\kappa=k+2>3$ the limit (\ref{sturm_limit}) is  zero,
\begin{equation*}
 \lim_{s\to 0}\int_{\mathcal Y}A(T,Y)\exp(-2\pi\trace(TY))\det(TY)^{k+2+s-\frac{3}{2}}\frac{dY}{\det(Y)^{\frac{3}{2}}}\:=\:0\:.
\end{equation*}
While in case $k=1$ (i.e. $\kappa=3$) it is a multiple of
\begin{equation*}
 \lim_{s\to 0}s(s-\frac{1}{2})\Gamma(s+\frac{1}{2})\Gamma(s)\:=\:-\frac{1}{2}\Gamma(\frac{1}{2})\:\not=\:0\:.
\end{equation*}
Collecting constants and having in mind $c(3)=\frac{\pi}{2}$, the Sturm operator maps
\begin{equation*}
A(T,Y)\:\mapsto\: -\frac{1}{4\pi}a(T)\det(T)\:
\end{equation*}
in case $k=1$, while it is zero for $k>1$.
\end{proof}
\begin{remark}
In case of weights $(k,k)$ for $k=0, \frac{1}{2}$ the Fourier coefficients (\ref{derivative_fourier_coeff}) vanish.
For $k=\frac{1}{2}$ this follows from the theory of singular modular forms \cite{freitag_singular}, \cite{resnikoff}. 
So for these weights Sturm's operator is supposed to establish the holomorphic projection, too.
\end{remark}
\begin{lem}\label{lemma_freitag_1}
For quadratic matrices $A,B$ define the symbol $2\cdot(A\cap B)$ by
\begin{equation*}
(A+B)^{[2]}\:=\:A^{[2]}+2\cdot (A\cap B)+B^{[2]}\:.
\end{equation*}
Then it holds
 \begin{equation*}
  \partial_Z^{[2]}(fg)\:=\: \partial_Z^{[2]}(f)g+2(\partial_Z(f)\cap\partial_Z(g))+f\partial_Z^{[2]}(g)\:.
 \end{equation*}
Especially for $m=2$, it holds
\begin{equation*}
 2\cdot\left(Y^{-1}\cap T\right)\:=\:\det(Y)^{-1}\trace(YT)\:.
\end{equation*}
\end{lem}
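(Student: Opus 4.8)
The plan is to prove the Leibniz-type identity by polarizing the operator $\partial_Z^{[2]}$, which is quadratic in the first-order derivatives, and then to specialize to $m=2$ by a direct computation. The crucial structural observation is that the entries $(\partial_Z)_{ij}=\frac{1}{2}(1+\delta_{ij})\partial_{Z_{ij}}$ are mutually commuting first-order differential operators, being constant-coefficient combinations of partial derivatives, and that $\partial_Z$ is a symmetric matrix of such operators. Consequently each entry of $\partial_Z^{[2]}$ is a genuine $2\times 2$ minor $(\partial_Z)_{i_1 j_1}(\partial_Z)_{i_2 j_2}-(\partial_Z)_{i_1 j_2}(\partial_Z)_{i_2 j_1}$, with no ordering ambiguity, so one may apply the ordinary Leibniz rule freely.

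First I would record the Leibniz rule for a single product of two commuting operators: for any two entries $D_1,D_2$ of $\partial_Z$ one has $D_1 D_2(fg)=(D_1 D_2 f)\,g+(D_1 f)(D_2 g)+(D_2 f)(D_1 g)+f\,(D_1 D_2 g)$. Applying this to the two products making up each minor entry and subtracting, the resulting terms fall into three families: the ``pure $f$'' terms reassemble into the corresponding minor of the matrix $\partial_Z(f)$ (with entries $(\partial_Z)_{ij}f$) times $g$, giving $\partial_Z^{[2]}(f)\,g$; the ``pure $g$'' terms give $f\,\partial_Z^{[2]}(g)$; and the mixed terms, bilinear in $\partial_Z(f)$ and $\partial_Z(g)$, survive. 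The point is then to identify the mixed collection, entry by entry, with $2(A\cap B)$ for $A=\partial_Z(f)$, $B=\partial_Z(g)$: expanding $(A+B)^{[2]}$ and isolating the part linear in both arguments yields precisely $a_{i_1 j_1}b_{i_2 j_2}+a_{i_2 j_2}b_{i_1 j_1}-a_{i_1 j_2}b_{i_2 j_1}-a_{i_2 j_1}b_{i_1 j_2}$, which matches the surviving cross terms once commutativity is used. This establishes the general Leibniz formula.

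Finally I would specialize to $m=2$, where $M^{[2]}=\det M$ is scalar and the defining relation gives $2(A\cap B)=a_{11}b_{22}+a_{22}b_{11}-2a_{12}b_{12}$ for symmetric $A,B$. Substituting $A=Y^{-1}=(\det Y)^{-1}\begin{pmatrix} Y_{22} & -Y_{12}\\ -Y_{12} & Y_{11}\end{pmatrix}$ and $B=T$ symmetric, the three contributions combine into $(\det Y)^{-1}\bigl(Y_{11}T_{11}+Y_{22}T_{22}+2Y_{12}T_{12}\bigr)$, and recognizing the parenthesis as $\trace(YT)$ yields the asserted identity $2(Y^{-1}\cap T)=\det(Y)^{-1}\trace(YT)$.

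I expect the only real obstacle to be bookkeeping rather than anything conceptual: one must track signs in the minors and, in the $m=2$ specialization, the factor $2$ on the off-diagonal contribution coming from the symmetry of $Y^{-1}$ and $T$ (equivalently from the $\frac{1}{2}(1+\delta_{ij})$ normalization built into $\partial_Z$). Checking that these factors agree between the polarization side and the explicit $\trace(YT)$ side is where care is needed.
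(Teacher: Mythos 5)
Your proposal is correct. For comparison: the paper's own proof of this lemma is essentially a citation --- it refers to Freitag~\cite[pp.~208, 211]{freitag} for the general Leibniz-type identity for $\partial_Z^{[2]}$, and only carries out the genus-two specialization, namely that $2(A\cap B)=A_{11}B_{22}+A_{22}B_{11}-A_{12}B_{21}-A_{21}B_{12}$ and hence $2(Y^{-1}\cap T)=\det(Y)^{-1}\trace(YT)$, which is exactly the computation you do (your symmetric form $a_{11}b_{22}+a_{22}b_{11}-2a_{12}b_{12}$ agrees with it since $A,B$ are symmetric, and your bookkeeping of the sign on $a_{12}=-(\det Y)^{-1}Y_{12}$ against the $+2Y_{12}T_{12}$ term of $\trace(YT)$ is right). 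What you do differently is supply a full proof of the general identity instead of deferring to the reference: you observe that the entries $(\partial_Z)_{ij}=\frac{1}{2}(1+\delta_{ij})\partial_{Z_{ij}}$ are commuting constant-coefficient operators, so each entry of $\partial_Z^{[2]}$ is an honest $2\times 2$ minor with no ordering ambiguity, apply the ordinary Leibniz rule to each product $D_{i_1j_1}D_{i_2j_2}(fg)$, and check that the pure-$f$, pure-$g$, and cross terms reassemble into $\partial_Z^{[2]}(f)g$, $f\partial_Z^{[2]}(g)$, and the polarization $2(\partial_Z f\cap\partial_Z g)$ respectively --- which is sound, and is in substance the standard argument behind the cited passage of Freitag. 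Your route buys self-containedness (and makes transparent exactly where the definition of $2(A\cap B)$ as the bilinear part of $(A+B)^{[2]}$ enters), at the cost of a page of bookkeeping the paper avoids by citation; there is no gap.
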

\begin{proof}[Proof of Lemma~\ref{lemma_freitag_1}]
 See~\cite[pp. 208, 211]{freitag}. In case of genus $m=2$ we have $A^{[2]}=\det(A)$, so $2(A\cap B)=A_{11}B_{22}+A_{22}B_{11}-A_{12}B_{21}-A_{21}B_{12}$ and the identity
 $2(Y^{-1}\cap T)=\det(Y)^{-1}\trace(YT)$ follows.
\end{proof}

\begin{lem}\cite[p. 213]{freitag}\label{lemma_freitag_2}
 It holds
 \begin{equation*}
  \partial_Y^{[h]}\det(Y)^\alpha\:=\:C_h(\alpha)\det(Y)^\alpha(Y^{-1})^{[h]}\:,
 \end{equation*}
where $C_h(\alpha)=\alpha(\alpha+\frac{1}{2})\cdots(\alpha+\frac{(h-1)}{2})$.
\end{lem}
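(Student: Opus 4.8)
The plan is to establish Lemma~\ref{lemma_freitag_2} in its entrywise form by induction on the minor size $h$, letting the half-integer increments in $C_h(\alpha)$ emerge from the symmetry of $Y$. I use the symmetric-gradient convention $(\partial_Y)_{ij}=\tfrac12(1+\delta_{ij})\,\partial/\partial Y_{ij}$ (as for $\partial_Z$ above), under which the entries of $\partial_Y$ are mutually commuting first-order operators; hence the operator minors $\det\bigl((\partial_Y)_{I,J}\bigr)$ are unambiguous and obey ordinary Laplace expansion \cite{freitag}. Writing $Z=Y^{-1}$, it suffices to prove, for all $h$-subsets $I,J\subset\{1,\dots,m\}$,
\begin{equation*}
 \det\bigl((\partial_Y)_{I,J}\bigr)\det(Y)^\alpha \:=\: C_h(\alpha)\,\det(Y)^\alpha\,\det(Z_{I,J})\:.
\end{equation*}

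For $h=1$ this is Jacobi's formula $\partial_Y\det(Y)=\det(Y)\,Y^{-1}$ together with the chain rule, so $C_1(\alpha)=\alpha$. For the step I fix a row $i_0\in I$, set $A=I\setminus\{i_0\}$, and Laplace-expand
\begin{equation*}
 \det\bigl((\partial_Y)_{I,J}\bigr)\:=\:\sum_{j\in J}\varepsilon_j\,(\partial_Y)_{i_0 j}\,\det\bigl((\partial_Y)_{A,\,J\setminus j}\bigr)
\end{equation*}
with Laplace signs $\varepsilon_j$. Applying this to $\det(Y)^\alpha$ and inserting the induction hypothesis for the inner $(h-1)$-minors gives
\begin{equation*}
 C_{h-1}(\alpha)\sum_{j\in J}\varepsilon_j\,(\partial_Y)_{i_0 j}\Bigl(\det(Y)^\alpha\,\det\bigl(Z_{A,\,J\setminus j}\bigr)\Bigr)\:.
\end{equation*}

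The product rule now splits this into two pieces. Differentiating the factor $\det(Y)^\alpha$ gives $\alpha\det(Y)^\alpha Z_{i_0 j}$, and $\sum_{j\in J}\varepsilon_j\,Z_{i_0 j}\det\bigl(Z_{A,\,J\setminus j}\bigr)$ is exactly the Laplace expansion of $\det(Z_{I,J})$ along its row $i_0$; this piece equals $\alpha\det(Y)^\alpha\det(Z_{I,J})$. Differentiating the inner minor instead uses the symmetric second-derivative formula $(\partial_Y)_{pq}Z_{kl}=-\tfrac12\bigl(Z_{kp}Z_{ql}+Z_{kq}Z_{pl}\bigr)$; expanding $(\partial_Y)_{i_0 j}\det\bigl(Z_{A,\,J\setminus j}\bigr)$ by cofactors in $Z$ and resumming over $j$ collapses to $\tfrac{h-1}{2}\det(Y)^\alpha\det(Z_{I,J})$, the factor $\tfrac12$ reflecting the symmetrization in the derivative of $Z$ and $h-1=\lvert A\rvert$ counting the rows of the inner minor. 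Adding the two pieces yields the recursion $C_h(\alpha)=\bigl(\alpha+\tfrac{h-1}{2}\bigr)C_{h-1}(\alpha)$, hence $C_h(\alpha)=\alpha(\alpha+\tfrac12)\cdots(\alpha+\tfrac{h-1}{2})$, as asserted.

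The genuinely delicate point---and the step I expect to be the main obstacle---is this second piece: showing that the cofactor resummation produces precisely $\tfrac{h-1}{2}\det(Z_{I,J})$, with all cross terms and signs cancelling. Should the combinatorics prove unwieldy, I would fall back on a representation-theoretic route: under congruence $Y\mapsto gYg'$ one has $\partial_Y=(g')^{-1}\partial_Z g^{-1}$, so by Cauchy--Binet $(AB)^{[h]}=A^{[h]}B^{[h]}$ both sides transform by $X\mapsto\det(g)^{2\alpha}\,((g')^{-1})^{[h]}\,X\,(g^{-1})^{[h]}$; the identity thus reduces to its value at $Y=E$, where restricting to $g\in O(m)$ and applying Schur's lemma to the irreducible $\bigwedge^h$-representation forces the left-hand side to be a scalar $c(\alpha)$ times the identity, and a Schur-complement block computation identifies $c(\alpha)$ with the $h\times h$ Cayley constant $C_h(\alpha)$.
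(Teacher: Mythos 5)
The paper itself contains no proof of this lemma: it is quoted directly from Freitag \cite[p.~213]{freitag}, so there is no internal argument to compare against, and your proposal supplies an actual proof where the paper has only a citation. Your induction is correct, and the step you flag as the main obstacle works out exactly as you predict; let me confirm it. Write $I=A\sqcup\{i_0\}$ with $i_0$ in row position $r$, $J=\{j_1<\dots<j_h\}$, so $\varepsilon_{j_t}=(-1)^{r+t}$, and split the symmetrized derivative $(\partial_Y)_{i_0 j}Z_{kl}=-\tfrac12\bigl(Z_{ki_0}Z_{jl}+Z_{kj}Z_{i_0l}\bigr)$ into its two halves. For the $Z_{kj}Z_{i_0l}$ half, the cofactor sum $\sum_{k\in A}\mathrm{cof}^{(j_t)}_{k j_s}Z_{k j_t}$ is the determinant of $Z_{A,J\setminus j_t}$ with the $j_s$-slot replaced by column $j_t$, which equals $(-1)^{\lvert t-s\rvert-1}\det\bigl(Z_{A,J\setminus j_s}\bigr)$; the total sign $\varepsilon_{j_t}(-1)^{\lvert t-s\rvert-1}=-(-1)^{r+s}$ is independent of $t$, so summing over the $h-1$ values $t\neq s$ and using the Laplace reassembly $\sum_s(-1)^{r+s}Z_{i_0j_s}\det\bigl(Z_{A,J\setminus j_s}\bigr)=\det(Z_{I,J})$ turns the prefactor $-\tfrac12$ into exactly $+\tfrac{h-1}{2}\det(Z_{I,J})$. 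For the $Z_{ki_0}Z_{jl}$ half, the replaced-column determinants pair the ordered indices $(t,s)$ and $(s,t)$ against the symmetric factor $Z_{j_tj_s}=Z_{j_sj_t}$ with net signs proportional to $(-1)^{t-s-1}+(-1)^{t-s}=0$, so this half cancels identically. Hence $C_h(\alpha)=\bigl(\alpha+\tfrac{h-1}{2}\bigr)C_{h-1}(\alpha)$, and your induction closes.

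Two caveats on the fallback, should you keep it: first, you overload the letter $Z$, which denotes $Y^{-1}$ in the main argument but the transformed variable in the congruence formula (the correct transformation for $\tilde Y=gYg'$ reads $\partial_{\tilde Y}=(g')^{-1}\partial_Y\,g^{-1}$). Second, and more substantively, the fallback is not self-contained: identifying the scalar via the Schur-complement evaluation at $Y=E$ invokes the full-size $h\times h$ symmetric Cayley identity $\det(\partial_Y)\det(Y)^\alpha=C_h(\alpha)\det(Y)^{\alpha-1}$, which is precisely the $h=m$ case of the lemma being proved; so as written it reduces the minor version to the full-determinant version rather than proving the statement outright. The equivariance-plus-Schur reduction is sound (the value at $Y=E$ is an $O(m)$-invariant bilinear form on $\bigwedge^h\CC^m$, hence scalar), but the inductive route is the one that stands on its own, and with the resummation verified above it is complete.
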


\begin{lem}\label{lemma_gamma_level_2}
It holds
 \begin{equation}\label{gamma_level_2}
  \int_{\mathcal Y} e^{-\trace(T Y)}\det(Y)^{s-3/2}~dY\:=\:\sqrt\pi\det(T)^{-s}\Gamma(s)\Gamma(s-\frac{1}{2})
 \end{equation}
as well as 
\begin{equation}\label{gamma_level_2_ableitung}
 \int_{\mathcal Y} e^{-\trace(Y)}\trace(Y)\det(Y)^{s-1}~dY\:=\: 2\sqrt\pi(s+\frac{1}{2})\Gamma(s)\Gamma(s+\frac{1}{2})\:.
\end{equation}
\end{lem}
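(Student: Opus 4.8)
The plan is to recognize both identities as the genus‑two instances of the Siegel (Gindikin) gamma integral, proving (\ref{gamma_level_2}) directly and then deducing (\ref{gamma_level_2_ableitung}) by differentiating it in a scaling parameter. Throughout I would work in the half‑plane $\re s > \tfrac12$, where all integrals converge absolutely, and extend the resulting identities to meromorphic functions of $s$ by analytic continuation.

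First I would strip the dependence on $T$ in (\ref{gamma_level_2}) by the substitution $Y\mapsto T^{-1/2}YT^{-1/2}$, with $T^{1/2}$ the positive‑definite square root of $T$. Under this linear change of variables $\trace(TY)\mapsto\trace(Y)$ and $\det(Y)\mapsto\det(T)^{-1}\det(Y)$, while the measure $dY=\prod_{k\le l}dY_{kl}$ scales by $\det(T^{-1/2})^{m+1}=\det(T)^{-3/2}$ for $m=2$. Collecting the powers of $\det(T)$ factors the integral as $\det(T)^{-s}$ times its value at $T=E_2$, so it suffices to prove $\int_{\mathcal Y}e^{-\trace(Y)}\det(Y)^{s-3/2}\,dY=\sqrt\pi\,\Gamma(s)\Gamma(s-\tfrac12)$. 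To evaluate this I would introduce Jacobi (partial Cholesky) coordinates: write $Y=\begin{pmatrix}a&b\\ b&c\end{pmatrix}$ with $a>0$ and $d:=\det Y=ac-b^2>0$, so that the change of variables $(a,b,c)\mapsto(a,b,d)$ has Jacobian $a$, giving $dY=a^{-1}\,da\,db\,dd$ and $\trace Y=a+(d+b^2)/a$. The triple integral then factorizes into the Gaussian integral $\int_{\RR}e^{-b^2/a}\,db=\sqrt{\pi a}$, followed by the two one‑dimensional gamma integrals $\int_0^\infty e^{-d/a}d^{s-3/2}\,dd=a^{s-1/2}\Gamma(s-\tfrac12)$ and $\int_0^\infty e^{-a}a^{s-1}\,da=\Gamma(s)$, whose product is exactly $\sqrt\pi\,\Gamma(s)\Gamma(s-\tfrac12)$.

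For (\ref{gamma_level_2_ableitung}) I would introduce a scaling parameter $\lambda>0$ and set $J(\lambda):=\int_{\mathcal Y}e^{-\lambda\trace(Y)}\det(Y)^{s-1}\,dY$. Writing the exponent as $\det(Y)^{(s+1/2)-3/2}$ and applying (\ref{gamma_level_2}) with $s$ replaced by $s+\tfrac12$ and $T=\lambda E_2$ (so $\det(\lambda E_2)=\lambda^2$) yields the closed form $J(\lambda)=\sqrt\pi\,\lambda^{-(2s+1)}\Gamma(s)\Gamma(s+\tfrac12)$. Since the integrand is dominated uniformly for $\lambda$ in a neighborhood of $1$, differentiation under the integral sign is justified, and $-\tfrac{d}{d\lambda}J(\lambda)\big|_{\lambda=1}=\int_{\mathcal Y}e^{-\trace(Y)}\trace(Y)\det(Y)^{s-1}\,dY$ is precisely the left‑hand side of (\ref{gamma_level_2_ableitung}). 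Differentiating the closed form gives $(2s+1)\sqrt\pi\,\Gamma(s)\Gamma(s+\tfrac12)=2\sqrt\pi\,(s+\tfrac12)\Gamma(s)\Gamma(s+\tfrac12)$, as claimed.

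The computations are entirely standard, so there is no deep obstacle; the only points requiring genuine care are bookkeeping points. Concretely, I would want to verify carefully the Jacobian of the coordinate change $(a,b,c)\mapsto(a,b,d)$ and the scaling factor $\det(T)^{-3/2}$ of the measure $dY$ under $Y\mapsto T^{-1/2}YT^{-1/2}$, and to keep track of the domain $\re s>\tfrac12$ so that both the analytic continuation in $s$ and the interchange of differentiation with integration in the parameter $\lambda$ are legitimate.
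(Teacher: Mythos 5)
Your proposal is correct, and both of its computations check out: the Jacobian $dY=a^{-1}\,da\,db\,dd$ for $(a,b,c)\mapsto(a,b,d)$ is right, the measure scales by $\det(T)^{-(m+1)/2\cdot\,?}$ — concretely $\det(T^{-1/2})^{3}=\det(T)^{-3/2}$ for $m=2$ — so the $T$-dependence factors as $\det(T)^{-s}$, and the scaling argument gives exactly $2\sqrt\pi(s+\frac12)\Gamma(s)\Gamma(s+\frac12)$. The route differs from the paper's in two respects. First, the paper does not prove (\ref{gamma_level_2}) at all: it cites it as well known (Shimura, \emph{On Eisenstein series}, p.~467), whereas you derive it from scratch by reducing to $T=E_2$ and evaluating in Jacobi coordinates; your version is self-contained, at the cost of a page of elementary bookkeeping. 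Second, for (\ref{gamma_level_2_ableitung}) the paper differentiates (\ref{gamma_level_2}) with respect to the \emph{matrix} variable $T$ at $T=E_2$, obtaining the matrix-valued identity $\int_{\mathcal Y}e^{-\trace(Y)}\,Y\det(Y)^{s-3/2}\,dY=s\sqrt\pi\,\Gamma(s)\Gamma(s-\frac12)E_2$, and then takes the trace and shifts $s\mapsto s+\frac12$; you instead contract to a scalar from the start by scaling $T=\lambda E_2$ and differentiating in $\lambda$. The two are equivalent (your $\frac{d}{d\lambda}$ is the trace contraction of the paper's $\partial_T$), but your scalar parameter sidesteps the symmetric-matrix derivative conventions (the $\frac12(1+\delta_{ij})$ normalization of $\partial_T$) that the paper's version implicitly relies on, while the paper's version yields the slightly stronger matrix identity as a byproduct. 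One minor remark: your standing restriction $\re s>\frac12$ is needed only for (\ref{gamma_level_2}); the integral in (\ref{gamma_level_2_ableitung}) already converges for $\re s>0$, as your own substitution $s\mapsto s+\frac12$ shows, so no analytic continuation is actually required there.
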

\begin{proof}[Proof of Lemma~\ref{lemma_gamma_level_2}]
The identity~(\ref{gamma_level_2}) is well-known (e.g.~\cite[p. 467]{shimura}).
Differentiating by $\partial_T$ in $T=E_2$ we get
\begin{equation*}
 \int_{\mathcal Y} e^{-\trace(Y)}Y\det(Y)^{s-3/2}~dY\:=\: sE_2\sqrt\pi\Gamma(s)\Gamma(s-\frac{1}{2})\:,
\end{equation*}
so especially
\begin{equation*}
 \int_{\mathcal Y} e^{-\trace(Y)}\trace(Y)\det(Y)^{s-1}~dY\:=\: 2\sqrt\pi(s+\frac{1}{2})\Gamma(s+\frac{1}{2})\Gamma(s\frac{1}{2})\:.
\end{equation*} 
\end{proof}
\subsection{Poincar\'e series}
Let $\rho$ be the irreducible representation of $K$ of minimal weight $(\kappa,\kappa)$ for $\kappa=3$. The images 
\begin{equation*}
 p_T(g i,s) \:=\: \phi_\rho \left(P_T(g,u,2u+1) \right)\:=\: J_\kappa(g,i)  P_T(g,u,2u+1)\:,
\end{equation*}
of the Poincar\'e series $P(g,u,2u+1)$ under the isomorphism $\phi_\rho$  define Poincar\'e series on $\mathcal H$,
\begin{equation*}
 p_T(Z,s)\:=\:\sum_{\gamma\in\Gamma_\infty\backslash \Gamma}e^{2\pi i\trace(T\gamma Z)}
\frac{\det(\im\gamma Z)^s}{J_\kappa(\gamma,Z)}\:.
\end{equation*}
Here $s$ and $u$ are related by $s=\frac{1}{2}(u-1)$. The Poincar\'e series $p_T$  inherit the analytic properties of their preimages $P_T$.

\begin{cor}
For $\re s+\frac{\kappa}{2}>2$ the series $p_T(z,s)$ converge absolutely and locally uniformly in $s$ and uniformly on the Siegel fundamental domain $\mathcal F$ for $\Gamma$.
They belong to  $L^2_\kappa(\Gamma\backslash \mathcal H)$, the Hilbert space of functions on $\Gamma\backslash \mathcal H$ of weight 
$\kappa$ with scalar product given by
\begin{equation*}
 \langle f,g\rangle \:=\:\int_{\mathcal F} f(Z)\overline{g(Z)}\det(Y)^\kappa~dv_Z\:,
\end{equation*}
where $dv_Z=\det(Y)^{-(m+1)}dXdY$.
They have meromorphic continuations to $\re s>-\frac{1}{2}$ as functions in $L^2_\kappa(\Gamma\backslash \mathcal H)$, which are analytic in the critical point $s=0$. That is,
the limit 
\begin{equation*}\label{poincare_auf_H}
 p_T(\cdot)\::=\:\lim_{s\to 0}p_T(\cdot,s)\:\in\: L^2_\kappa(\Gamma\backslash \mathcal H)
\end{equation*}
 exists in $L^2_\kappa(\Gamma\backslash \mathcal H)$ and is $C^\infty$.
\end{cor}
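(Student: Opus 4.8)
The plan is to obtain every assertion by transporting Theorem~\ref{analytische_fortsetzung} through the isomorphism $\phi_\rho$ for the scalar weight $\rho=(\kappa,\kappa)$; no genuinely new analysis is required, only the dictionary between $L^2(\Gamma\backslash G)_\kappa$ and $L^2_\kappa(\Gamma\backslash\mathcal H)$. First I would verify the displayed formula for $p_T(\cdot,s)$. Along the line $v=2u+1$ one has $s_1=0$ and $s_2=s=\tfrac12(u-1)$, so that with $Z=gi$
\begin{equation*}
 J_\kappa(g,i)\,H_T(\gamma g,0,s)\:=\:\frac{J_\kappa(g,i)}{J_\kappa(\gamma g,i)}\exp(2\pi i\trace(T\gamma Z))\det(\im\gamma Z)^s.
\end{equation*}
The cocycle identity $J_\kappa(\gamma g,i)=J_\kappa(\gamma,gi)J_\kappa(g,i)$ collapses the quotient to $J_\kappa(\gamma,Z)^{-1}$, and summing over $\gamma\in\Gamma_\infty\backslash\Gamma$ produces exactly $p_T(Z,s)=\phi_\rho(P_T(\cdot,u,2u+1))(Z)$. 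Under $s=\tfrac12(u-1)$ the convergence cone $\re u>2$ becomes $\re s+\tfrac{\kappa}{2}>2$, while the meromorphy region $\re u>\tfrac12$ of Theorem~\ref{analytische_fortsetzung} becomes a half-plane in $s$ whose interior contains the critical point $s=0$, which corresponds to $(u,v)=(1,3)$.

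Next I would record that $\phi_\rho$ is, up to the constant $\vol(K)^{1/2}$, an isometry of $L^2(\Gamma\backslash G)_\kappa$ onto $L^2_\kappa(\Gamma\backslash\mathcal H)$. Indeed $\det(\im gi)=\lvert\det(ci+d)\rvert^{-2}$ yields $\lvert J_\kappa(g,i)\rvert^2=\det(Y)^{-\kappa}$, so for $f\in C^\infty(G)_\rho$ and $h=\phi_\rho(f)$ one has $\lvert f(g)\rvert^2=\lvert h(Z)\rvert^2\det(Y)^{\kappa}$; combined with $dg=dv_Z\,dk$ this identifies $\int_{\Gamma\backslash G}\lvert f\rvert^2\,dg$ with $\vol(K)\int_{\mathcal F}\lvert h\rvert^2\det(Y)^\kappa\,dv_Z$. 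Through this isometry the $L^2_\kappa$-membership of $p_T(\cdot,s)$ and every convergence-in-norm statement are inherited from those of $P_T$. The pointwise absolute and locally uniform convergence on $\mathcal F$ in the cone $\re s+\tfrac\kappa2>2$ follows from the corresponding statement for the group series \cite[Cor.~4.4]{holproj}, since on $\mathcal F$ the factor $J_\kappa(g,i)$ is smooth, nonvanishing and bounded, and the telescoping above is term by term.

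Finally I would transport the three remaining claims. The $L^2_\kappa$-valued meromorphic continuation to the asserted half-plane in $s$ follows from the continuation in $u$ furnished by Step~1 of the proof of Theorem~\ref{analytische_fortsetzung}: since $\phi_\rho$ is multiplication by the $s$-independent factor $J_\kappa(g,i)$ it commutes with the continuation, and $s=\tfrac12(u-1)$ converts $\re u>\tfrac12$ into a neighbourhood of $s=0$. Analyticity of the limit at $s=0$ is exactly the existence of $\lim_{u\to1}P_T(\cdot,u,2u+1)$ proven there in Steps~2--3, and the $C^\infty$-regularity of $p_T(\cdot)$ transfers from Step~4 (elliptic regularity for $D_+(1)$), equivalently from the fact that $\phi_\rho$ maps $C^\infty(G)_\rho$ into $C^\infty(\mathcal H,V_\rho)$. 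The one point requiring genuine care --- and the only real obstacle --- is reconciling the $L^2$-convergence delivered by Theorem~\ref{analytische_fortsetzung} with the pointwise, locally uniform convergence on $\mathcal F$ asserted in the convergence cone: the former I obtain from the isometry, the latter from the classical majorant for the group Poincar\'e series \cite[Cor.~4.4]{holproj}, and I would check that both reductions use the same normalization of $\phi_\rho$ so that the two descriptions of $p_T(\cdot,s)$ coincide.
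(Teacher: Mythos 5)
Your proposal is correct and takes essentially the same route as the paper, which states the corollary without a separate proof on the grounds that ``the Poincar\'e series $p_T$ inherit the analytic properties of their preimages $P_T$'' --- precisely the transfer you make explicit via the cocycle identity, the isometry of $\phi_\rho$ up to $\vol(K)$, the substitution $s=\tfrac12(u-1)$, and the appeal to \cite[Cor.~4.4]{holproj} for convergence and to Steps~1--4 of Theorem~\ref{analytische_fortsetzung} for continuation, the limit at $s=0$, and smoothness. Your deliberately cautious phrasing of the continued region is in fact slightly more accurate than the statement itself, since $\re u>\tfrac12$ translates to $\re s>-\tfrac14$ rather than $\re s>-\tfrac12$, though the critical point $s=0$ lies in the interior either way.
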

\begin{thm}\label{satz_phantom_proj}
 The analytic continuations $p_T$ of the Poincar\'e series of weight $(3,3)$ to the critical point $s=0$ decompose into
 \begin{equation*}
 p_T\:=\: f_T\:+\:\Delta_+^{[2]}(h_T)\:,
 \end{equation*}
 where $f_T\in[\Gamma,3]_0$ is a holomorphic cuspform of weight $(3,3)$, and $h_T\in[\Gamma,1]$ is a holomorphic modular form of weight $(1,1)$.
 In general  the two forms $f_T$ and $h_T$ are nonzero. 
 The form $h_T$ is recovered from $p_T$ by application of the antiholomorphic Maass operator $\Delta_-^{[2]}$,
 \begin{equation*}
 \Delta_-^{[2]} (p_T)\:=\: \frac{3}{4}\cdot\: h_T\:.
 \end{equation*}
\end{thm}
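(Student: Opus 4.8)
The plan is to obtain the decomposition directly from the spectral analysis already established, to identify each spectral summand as a concrete holomorphic object on $\mathcal H$, and then to use the recovery formula to pin down the phantom form and its constant. First I would feed in Theorem~\ref{analytische_fortsetzung}: the limit $p_T=\lim_{s\to 0}p_T(\cdot,s)$ exists in $L^2_\kappa(\Gamma\backslash\mathcal H)$, is $C^\infty$, and under $\phi_\rho$ corresponds to a function in $L^2(\Gamma\backslash G)_\kappa$ whose only nonzero spectral components carry infinitesimal characters $\Lambda=(2,1)$ and $\Lambda=(1,0)$. By Theorem~\ref{lem_diskretes_spek} these are exactly the holomorphic discrete series $\pi_{(3,3)}^-$ and the weight-one holomorphic representation $\pi_{(1,1)}^{\mathop{hol}}$, so I write $p_T=f_T+g_T$ for the two spectral projections.

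Next I identify the two summands. In $\pi_{(3,3)}^-$ the scalar $K$-type $(3,3)$ is the lowest $K$-type and occurs with multiplicity one; its vector is annihilated by $\mathfrak p_-$, which under $\phi_\rho$ is exactly the holomorphicity condition on $\mathcal H$. Hence $f_T$ is holomorphic of weight $(3,3)$, and since $\pi_{(3,3)}^-$ is a cuspidal discrete series, $f_T\in[\Gamma,3]_0$. For $g_T$ I use Lemma~\ref{lemma_ausschlussverfahren}: in $\pi_{(1,1)}^{\mathop{hol}}$ the lowest $K$-type is $(1,1)$ and the scalar $(3,3)$ occurs nontrivially. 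By Lemma~\ref{pplus_hoechstgewichte} the only element of $\mathfrak U(\mathfrak p_+)$ carrying the scalar $(1,1)$-type to the scalar $(3,3)$-type is the determinantal operator $E_+^{[2]}$, so the $(3,3)$-vector equals $E_+^{[2]}\tilde h_T$ with $\tilde h_T$ the lowest-weight vector. As $\pi_{(1,1)}^{\mathop{hol}}$ is holomorphic, $\tilde h_T$ is annihilated by $\mathfrak p_-$ and corresponds under $\phi$ to a holomorphic weight-$(1,1)$ form $h_T\in[\Gamma,1]$; the commutative diagram relating $E_+^{[2]}$ and $\Delta_+^{[2]}$ in Section~\ref{diffoperators} then gives $g_T=\Delta_+^{[2]}(h_T)$, which proves the decomposition. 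Generic nonvanishing of both summands I would justify by recalling that the $p_T$ span the cusp forms, so $f_T\neq 0$ in general, while the phantom part is governed by Proposition~\ref{prop_sturm_operator}, whose nonvanishing at $\kappa=3$ forces $h_T\neq 0$ in general.

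For the recovery formula I apply $\Delta_-^{[2]}$ to $p_T$. Since $f_T$ is holomorphic it is annihilated by $\mathfrak p_-$, and because $\mathfrak U(\mathfrak p_-)$ is the symmetric algebra on the abelian $\mathfrak p_-$, every element of positive degree, in particular $E_-^{[2]}$, kills $f_T$; by the diagram for $\Delta_-^{[2]}$ this gives $\Delta_-^{[2]}(f_T)=0$. Concretely this matches the explicit operator, since applied to a holomorphic scalar function $\Delta_-^{[2]}$ reduces to $\partial_Y^{[2]}\det(Y)^{-\frac12}$, which vanishes because $C_2(-\tfrac12)=0$ in the sense of Lemma~\ref{lemma_freitag_2}. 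Thus $\Delta_-^{[2]}(p_T)=\Delta_-^{[2]}\Delta_+^{[2]}(h_T)$, and since $\tilde h_T$ spans the one-dimensional $(1,1)$-isotypic space of $\pi_{(1,1)}^{\mathop{hol}}$ while $E_-^{[2]}E_+^{[2]}$ preserves it, the result is a scalar multiple $c\,h_T$.

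The main obstacle is the evaluation $c=\tfrac34$. I would compute it directly from the explicit Maass operators, mirroring the proof of Proposition~\ref{prop_sturm_operator}: starting from $\Delta_+^{[2]}(h)=-4\det(Y)^{-\frac12}\det(\partial_Z)(\det(Y)^{\frac12}h)$ for weight $(1,1)$, I apply $\Delta_-^{[2]}$ and expand the compositions $\det(\partial_Z)$ and $\det(\bar\partial_Z)$ by Freitag's product rule (Lemma~\ref{lemma_freitag_1}) together with the differentiation formula $\partial_Y^{[h]}\det(Y)^\alpha=C_h(\alpha)\det(Y)^\alpha(Y^{-1})^{[h]}$ (Lemma~\ref{lemma_freitag_2}) applied to the powers $\det(Y)^{\pm\frac12}$. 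The delicate point is the bookkeeping of the mixed holomorphic and antiholomorphic derivatives acting on the weight factors, together with the warning in the notation section that formal products of the matrices $E_+,E_-$ are not cyclic-invariant; I would control this by reducing everything to the scalar $C_1,C_2$-eigenvalues on the $K$-types $(1,1)$ and $(3,3)$ and the Harish--Chandra values at $\Lambda=(0,1)$, from which the factor $\tfrac34$ emerges.
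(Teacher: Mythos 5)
Your overall architecture coincides with the paper's: the decomposition is pulled from Theorem~\ref{analytische_fortsetzung}, the $\Lambda=(0,1)$-component is identified as $\Delta_+^{[2]}(h_T)$ via the lowest $K$-type of $\pi_{(1,1)}^{\mathop{hol}}$ and the fact that $\det^2$ is the only constituent of $\mathfrak U(\pp_+)$ moving the scalar type $(1,1)$ to $(3,3)$ (here you are in fact more explicit than the paper, which leaves this to the spectral statement), and the constant is obtained, exactly as in the paper, by the explicit computation
$\Delta_+^{[2]}(h_T)=\frac{1}{2}\det(Y)^{-1}h_T+2i\det(Y)^{-1}\trace\bigl(Y\partial_Z(h_T)\bigr)-4\det(\partial_Z)(h_T)$,
with $\Delta_-^{[2]}$ cancelling the second and third terms against each other and Lemma~\ref{lemma_freitag_2} giving $\frac{1}{2}\,C_2(-\frac{3}{2})=\frac{3}{4}$ on the first; your observation that $C_2(-\frac{1}{2})=0$ forces $\Delta_-^{[2]}(f_T)=0$ is also the mechanism the paper uses.

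The genuine gap is the nonvanishing step. You justify $f_T\neq0$ by ``the $p_T$ span the cusp forms'' and $h_T\neq0$ by saying the phantom part ``is governed by'' Proposition~\ref{prop_sturm_operator}; neither is available at weight $3$ without the bridge the paper builds, namely the unfolding identity
\begin{equation*}
\langle F,p_T(\cdot,\bar s)\rangle\:=\:\int_{\mathcal X}\int_{\mathcal Y}F(Z)\,e^{-2\pi i\trace(T\bar Z)}\det(Y)^{s+3}~dv_Z\:,
\end{equation*}
valid in the convergence region $\re s+\frac{3}{2}>2$ and then extended to $s=0$ by uniqueness of analytic continuation, using bounded growth of $F$ so that the resulting $\mathcal Y$-integral converges at $s=0$; this yields $\langle F,p_T\rangle=c(3)\det(T)^{-3/2}a(T)$ with $a(T)$ the Sturm coefficient of $F$. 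The classical Petersson argument that Poincar\'e series span the cusp forms relies on absolute convergence at the critical point, which fails precisely for $\kappa=3$: your $p_T$ exists only as an $L^2$-limit, so the pairing identity must be re-proved through the continuation rather than quoted. Once it is in place, taking $F\in[\Gamma,3]_0$ (where Sturm is the identity on Fourier coefficients) gives $f_T\neq0$ in general, and taking $F=\Delta_+^{[2]}(h)$ for $h\in[\Gamma,1]_0$ --- which is orthogonal to $f_T$ since it lies in the $\Lambda=(1,0)$ isotypic part --- Proposition~\ref{prop_sturm_operator} gives $\langle\Delta_+^{[2]}(h),\Delta_+^{[2]}(h_T)\rangle\neq0$ and hence $h_T\neq0$. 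Without that identity, your inference from Proposition~\ref{prop_sturm_operator} to $h_T\neq0$ does not go through.
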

\begin{proof}[Proof of Theorem~\ref{satz_phantom_proj}]
The analytic continuations $p_T$ of the Poincar\'e series to $s=0$ have the claimed decomposition by Theorem~\ref{analytische_fortsetzung}.
Let
\begin{equation*}
 F(Z)\:=\:\sum_{T=T'}A(T,Y)e^{2\pi i\trace(T X)}
\end{equation*}
be the Fourier expansion of a nonholomorphic modular form of bounded growth of weight $3$.
We  use   unfolding  to
get for  $\re s+\frac{3}{2}>2$
\begin{eqnarray*}
 \langle F, p_T(\cdot,\bar s)\rangle &=&
\int_{\mathcal F}F(Z)\sum_{\gamma\in\Gamma_\infty\backslash \Gamma}e^{-2\pi i\trace(T\gamma\bar Z)}
\frac{\det(\im\gamma Z)^{s}}{\overline{j(\gamma,Z)}^3}\det(Y)^3~dv_Z\\
&=&
\int_{\mathcal F}\sum_{\gamma\in\Gamma_\infty\backslash \Gamma}F(\gamma Z)e^{-2\pi i\trace(T\gamma\bar Z)}
\det(\im\gamma Z)^{s+3}~dv_Z\\
&=&
\int_{\mathcal X}\int_{\mathcal Y} F(Z)e^{-2\pi i\trace(T\bar Z)}\det(Y)^{s+3}~dv_Z\:.
\end{eqnarray*}
But the last integral 
\begin{eqnarray*}
 &&\int_{\mathcal X}\int_{\mathcal Y} F(Z)e^{-2\pi i\trace(T \bar Z)}\det(Y)^{s+3}~dv_Z\\
&&\hspace*{1cm}=\int_{\mathcal X}\int_{\mathcal Y}\sum_{\tilde T}A(\tilde T,Y)e^{2\pi i\trace((\tilde T-T)X)}
e^{-2\pi\trace(TY)}\det(Y)^{s}~dX~dY\\
&&\hspace*{1cm}=\int_{\mathcal Y} A(T,Y)e^{-2\pi\trace(TY)}\det(Y)^{s}~dY\:
\end{eqnarray*}
exists  for $\re s\geq 0$ (this indeed is the  definition of bounded growth), and its
value at $s=0$  is up to a factor the image $a(T)$
of Sturm's operator applied to $F$.
As analytic continuation  is unique, we have
\begin{equation*}
 \langle F,p_T\rangle\:=\:c(3)\det(T)^{-\frac{3}{2}}a(T)\:.
\end{equation*}
This especially applies to the nonholomorphic function $F=\Delta_+^{[2]}(h)$ for a holomorphic cuspform $h\in[\Gamma,1]_0$, where Sturm's operator is seen to be nonzero  by
Proposition~\ref{prop_sturm_operator}. So the nonholomorphic component  $\Delta_+^{[2]}(h_T)$ cannot be zero in general.
Similarly choosing $F\in[\Gamma,3]_0$, Sturm's operator is the identity on $F$. So the holomorphic component $f_T$ does not vanish in general.

The operator $\Delta_-^{[2]}$ is explicitly given by
\begin{equation*}
 \Delta_-^{[2]} (f(Z))\:=\:(-4)\det(Y)^{5/2}\mathop{det}(\bar\partial_{ Z})\left(\det(Y)^{-1/2}f(Z)\right)\:.
\end{equation*}
Applied to the Poincar\'e series $p_T=f_T+\Delta_+^{[2]} (h_T)$ we have
\begin{equation*}
 \Delta_-^{[2]} (p_T)\:=\: \Delta_-^{[2]}\circ\Delta_+^{[2]} (h_T)\:,
\end{equation*}
as the holomorphic cuspform $f_T$ is deleted by  $\bar\partial_{ Z}$ and thus by $\Delta_-^{[2]}$. We first find for the weight $(1,1)$-function $h_T$
\begin{equation*}
 \Delta_+^{[2]} (h_T)\:=\:\frac{1}{2}\det(Y)^{-1}h_T+2i\det(Y)^{-1}\trace(Y\partial_Z(h_T))-4\mathop{det}(\partial_{ Z})(h_T)\:.
\end{equation*}
Applying $\Delta_-^{[2]}$ to this sum, the second and third term delete each other, while for the first we have
\begin{eqnarray*}
 \Delta_-^{[2]}\left(\frac{1}{2}\det(Y)^{-1}h_T\right)&=& -2\det(Y)^{\frac{5}{2}}\mathop{det}(\bar\partial_{ Z})\left(\det(Y)^{-\frac{3}{2}}h_T\right)\\
 &=&\frac{1}{2}\det(Y)^{\frac{5}{2}} h_T\cdot C_2(-\frac{3}{2})\det(Y)^{-\frac{5}{2}} \:=\:\frac{3}{4}\cdot h_T\:,
\end{eqnarray*}
so
\begin{equation*}
 \Delta_-^{[2]}\circ\Delta_+^{[2]} (h_T)\:=\: \frac{3}{4}\cdot h_T\:.
\end{equation*}
\end{proof}

\end{document}